\def\b{\boldsymbol}
\def\E{\mathbb{E}}
\def\R{\mathbb{R}}
\def\P{\mathbb{P}}
\def\T{\mathbb{T}}
\def\cL{\mathcal{L}}
\def\cH{\mathcal{H}}
\newtheorem{theorem}{Theorem}[section]
\newtheorem{lemma}{Lemma}[section]
\newtheorem{remark}{Remark}[section]
\newtheorem{assumption}{Assumption}[section]
\newtheorem{proposition}{Proposition}[section]
\newtheorem{definition}{Definition}[section]
\numberwithin{equation}{section}
\title{A second-order Langevin sampler preserving positive volume for isothermal–isobaric ensemble}
\date{}
\author[1,2,3]{Lei Li\thanks{E-mail: leili2010@sjtu.edu.cn}}
\author[1]{Yuzhou Peng\thanks{E-mail: pengyz@sjtu.edu.cn}}
\affil[1]{School of Mathematical Sciences, Shanghai Jiao Tong University, Shanghai, 200240, P. R. China}
\affil[2]{Institute of Natural Sciences and MOE-LSC, Shanghai Jiao Tong University, Shanghai, 200240, P. R. China}
\affil[3]{Shanghai Artificial Intelligence Laboratory, Shanghai, P. R. China}
\begin{document}

\maketitle

\begin{abstract}
We propose in this work a second-order Langevin sampler for the isothermal-isobaric ensemble (the NPT ensemble), preserving a positive volume for the simulation box. We first derive the suitable equations of motion for particles to be coupled with the overdamped Langevin equation of volume by sending the artificial mass of the periodic box to zero in the work of Liang et. al. [J. Chem. Phys. 157(14)]. We prove the well-posedness of the new system of equations and show that its invariant measure is the desired ensemble. The new continuous time equations not only justify the previous cell-rescaling methods, but also allow us to choose a suitable friction coefficient so that one has additive noise after a change of variable by taking logarithm of the volume. This observation allows us to propose a second order weak scheme that guarantees the positivity of the volume.
Various numerical experiments have been performed to demonstrate the efficacy of our method. 
\end{abstract}

\allowdisplaybreaks

\section{Introduction}

Molecular dynamics (MD) simulation has made a great contribution to the study of the thermodynamic properties of complex multi-particle systems across various disciplines, including physics, chemistry, biology and pharmaceuticals \cite{allen2017computer, brunger1987crystallographic, hollingsworth2018molecular, karplus1990molecular, yamakov2002dislocation}. 
Given the large or infinite number of particles in these macro-scale systems, periodic boxes are typically utilized to approximate the entire system, dividing the whole space into infinitely many adjacent simulation boxes, with one being the original simulation box and others being copies called images. Periodic boundary conditions enable particles leaving one side of the simulation box to re-enter from the opposite side with the same momenta, ensuring that all the particles within the box experience a similar environment. The application of periodic boxes facilitates the study of larger systems with fewer particles, reducing computational costs while still providing accurate insights into the behavior of the system as a whole \cite{frenkel2023understanding}. In practice, these systems are often subjected to constant temperature and/or constant pressure conditions. This work focuses on the isothermal–isobaric ensemble, where the monatomic particle system interacts with an external bath that maintains constant temperature and pressure throughout the simulation. This ensemble, commonly known as the NPT ensemble because the number of particles, pressure and temperature are all kept constant, reflects laboratory conditions typical in simulations of solvated proteins, membranes and viruses \cite{tobias1997atomic}.

Various thermostats and barostats have been developed to modeled constant temperature and pressure in the external baths. The first barostat for maintaining pressure during the MD simulation was proposed in the pioneering work of Andersen \cite{andersen1980molecular}, where the volume of the simulation box fluctuates based on the difference between external and internal pressure. The thermostat for preserving temperature works through stochastic collisions modeled by momenta resampling. Subsequently, Parrinello and Rahman extended Andersen's barostat to accommodate periodic boxes of arbitrary symmetry in the isoenthalpic-isobaric ensemble \cite{parrinello1980crystal,parrinello1981polymorphic}. Later, Andersen's technique was extended to deal with rigid molecule systems, where the intermolecular potential is based on atom–atom interactions \cite{ryckaert1983introduction} and the periodic box could be trigonal \cite{nose1983structural, nose1983study}. However, Andersen's method lacks effectiveness in preserving the dynamical properties of the systems.

An alternative approach to controlling temperature and pressure was proposed by Berendsen et al., where the momenta of the particles and the volume of the simulation box fluctuate according to the difference between the instantaneous value of temperature and pressure and their desired counterparts \cite{berendsen1984molecular}. Although efficient in equilibrating the system, Berendsen's barostat inaccurately samples the target ensemble and is typically employed as a burn-in technique \cite{prasad2018best}. Motivated by the stochastic version of the Berendsen thermostat \cite{bussi2007canonical, bussi2009isothermal}, a carefully chosen stochastic term was incorporated into the Berendsen barostat, leading to the development of the stochastic cell rescaling method \cite{bernetti2020pressure}. However, concerns remain regarding its algorithmic specifics and fidelity to the invariant measure under typical discretizations such as the commonly used Euler-Maruyama scheme.

The Nos{\'e}–Hoover thermostat \cite{hoover1985canonical,nose1984molecular,nose1984unified,nose1983constant} replaces the stochastic collisions in Andersen's thermostat with an auxiliary variable to simulate the effects of the bath, ensuring smooth, deterministic and time-reversible trajectories and performing better in preserving the dynamical properties of the system. However, it is inefficient in equilibrating the system due to the lack of ergodicity \cite{frenkel2023understanding}. The Nos{\'e}–Hoover chain method was then proposed to solve this problem by introducing additional parameters \cite{martyna1992nose}. Later, the Martyna-Tobias-Klein (MTK) algorithm, combining Nos{\'e}-Hoover thermostats with suitable barostats, and its subsequent extensions have found widespread use in NPT ensemble simulations \cite{martyna1994constant, martyna1996explicit}.

Another widely adopted thermostat is the Langevin dynamics \cite{frenkel2023understanding}, which employs dissipative forces and noise satisfying the fluctuation-dissipation relation. A significant advantage of the Langevin dynamics is the ergodicity. The Langevin piston algorithm and its extensions \cite{bussi2007accurate, di2015stochastic, feller1995constant, gao2016sampling, gronbech2014constant, kolb1999optimized} combine the Langevin thermostat with suitable barostats to maintain constant temperature and pressure. 

In this work, we consider a system of $N$ particles with masses $\{m_i\}_{i=1}^N$, positions $\{\b{r}_i\}_{i=1}^N$ and momenta $\{\b{p}_i\}_{i=1}^N$, contained within a cubic periodic box of length $L$ and volume $V=L^3$. The particles interact with each other through some certain potential under constant pressure $P_0$ and temperature $T_0$. For compactness, we fix $N$, set $d=3N$ and define
\begin{gather}
\begin{split}
&\b{r}=(\b{r}_1,\cdots, \b{r}_N)\in \T_L^d,\\
& \b{p}=(\b{p}_1,\cdots, \b{p}_N)\in \R^d,\\
& \b{m}=\text{diag}(m_1\b{I}_3,\cdots, m_N\b{I}_3), \\
\end{split}
\end{gather}
where $\T_L^d$ represents a $d$-dimensional torus with side length $L$. The potential energy of the system is denoted by $U(\b{r}; V)$, indicating the dependence on the volume of the simulation box, and the stationary distribution of the NPT ensemble is given by
\begin{gather}\label{eq:invariant_measure}
    \pi(d\b{r}, d\b{p}, dV) \propto \exp{\left[-\beta\left(U(\b{r}; V) + \frac{1}{2}\b{p}^T\b{m}^{-1}\b{p} + P_0V\right)\right]}d\b{r}d\b{p}dV, 
\end{gather}
where $\beta = (k_BT_0)^{-1}$ with $k_B$ denoting the Boltzmann constant and $T_0$ the temperature. 
An important statistics to evaluate the performance of our algorithm is the instantaneous pressure of the system, given by
\begin{gather}\label{eq:inspressure1}
P = \frac{1}{3V}\b{p}^T\b{m}^{-1}\b{p}
-\frac{1}{3V}\b{r}^T\nabla_{\b{r}}U(\b{r}; V)-\frac{\partial }{\partial V}U(\b{r}; V).
\end{gather}
There are two important pressure virial theorems associated with the NPT ensemble and both of them are invariant properties of the NPT ensemble. 
\begin{gather}\label{eq:P-virial}
    \begin{split}
        \langle P\rangle &= P_0,\\
        \langle PV\rangle &= P_0\langle V\rangle - \beta^{-1},
    \end{split}    
\end{gather}
where $\langle \cdot \rangle$ denotes the NPT ensemble average. The first theorem relates the internal and external pressure, ensuring that the constant pressure condition is satisfied. The second theorem relates the internal and external work, which is critical for verifying the equilibrium and correctness of molecular dynamics simulations in NPT conditions. Interested readers can refer to \cite[Appendix B]{martyna1994constant} for the verification of these two theorems and further details.

By introducing an artificial mass $M$ and a momentum variable $p^V$ for the cell, the following equations of motion has been derived in \cite[Eq. (3.13)]{liang2022random}, which can give the target NPT ensemble:
\begin{gather}\label{eq:liang313}
    \begin{split}
        & \dot{\b{r}} = \b{m}^{-1}\b{p} + \frac{\dot{V}}{3V}\b{r}, \,\dot{\b{p}} = -\nabla_{\b{r}} U - \frac{\dot{V}}{3V}\b{p} - \b{\gamma} \b{p} + \sqrt{2\beta^{-1}\b{\gamma}\b{m}}\, \dot{\b{W}},\\
        &\dot{V} = \frac{p^V}{M}, \, \dot{p}^V = -\frac{\partial H}{\partial V} - \tilde{\gamma}(V)p^V + \sqrt{2\beta^{-1}\tilde{\gamma}(V)M}\,\dot{W}_V,
    \end{split}
\end{gather}
where $\b{\gamma} = \text{diag}(\gamma_1\b{I}_3,\cdots, \gamma_N\b{I}_3)$ and $\tilde{\gamma}(V)$ are the friction coefficients of $\b{p}$ and $p^V$, respectively, each component of $\b{W}\in\R^d$ and $W_V\in\R$ are independent standard Wiener processes.
The Hamiltonian (see \cite{liang2022random}) is given by
\begin{gather}
H(\b{s},\b{p}^{\b{s}},V, p^V) = U(V^{1/3}\b{s}; V) + P_0V + \frac{1}{2}V^{-2/3}\b{p}^{\b{s},T}\b{m}^{-1}\b{p}^{\b{s}} + \frac{(p^V)^2}{2M},
\end{gather}
where $\b{s} = V^{-1/3}\b{r}$ is the reduced variable and $\b{p}^{\b{s}} = V^{1/3}\b{p}$ is the conjugate variable of $\b{s}$.
With this Hamiltonian, the Langevin system \eqref{eq:liang313} can be written as
\begin{gather}\label{eq:originalunderdampedLang}
    \begin{split}
        d\b{s} & = \frac{\partial H}{\partial\b{p}^{\b{s}}} dt,\, d\b{p}^{\b{s}} = -\frac{\partial H}{\partial\b{s}} dt - V^{2/3}\b{\gamma m} \frac{\partial H}{\partial\b{p}^{\b{s}}} dt + V^{1/3}\sqrt{2\beta^{-1}\b{\gamma}\b{m}}\, d\b{W},\\
        dV & =\frac{\partial H}{\partial p^V}dt,\, dp^V = -\frac{\partial H}{\partial V}dt - M\tilde{\gamma}(V)\frac{\partial H}{\partial p^V}dt + \sqrt{2\beta^{-1}\tilde{\gamma}(V)M}\,dW_V.
    \end{split}
\end{gather}
We remark that the instantaneous pressure \eqref{eq:inspressure1} satisfies $\frac{\partial H}{\partial V} = P_0-P$.

Note that the artificial mass $M$ of the volume has no actual physical meaning, and thus we are concerned with the zero-mass limit $M\to0$, which makes more sense and is more convenient for simulations. It has been verified in \cite{liang2022random} that the limit equation for $V$ coincides with the stochastic cell rescaling method \cite{bernetti2020pressure}, which is an overdamped Langevin equation for $V$ without introducing an artificial mass $M$.
However, the complete limit equations of motion are unclear, particularly for $\b{r}$ and $\b{p}$. 
In fact, the limits for the compressibility term $\dot{V}/(3V)dt=dV/(3V)$ in the equations for $\b{r}$ and $\b{p}$ are unclear.
In the $M>0$ case, $V$ is absolutely continuous and the quadratic variation of $V$ vanishes. However, the quadratic variation of $V$ is nontrivial in the zero-mass limit, which could possibly bring nontrivial contribution for the limit of $dV/(3V)$. 
In fact, one can verify directly that if one use the same equations of $\b{r}$ and $\b{p}$ with the limit equation of $V$, the NPT ensemble is not the invariant measure.

In this paper, we first identify the limit of the equations for $\b{r}$ and $\b{p}$ so that the correct target NPT ensemble can be kept. 
As we shall see, this limit equation is helpful for the justification of some numerical methods used in \cite{bernetti2020pressure}. Furthermore, considering that the volume $V$ should always be positive, we will design the friction coefficients suitably so that this can be guaranteed. For the purpose of discretization for simulations, we employ a change of variable, which together with the suitably chosen friction coefficient would give an additive noise in the equation. Based on this, we design a second order weak scheme using the operator splitting technique. 

The rest of this paper is organized as follows: In Section \ref{sec:EOM}, we derive the zero-mass limit of Eq. \eqref{eq:liang313}, which preserves the target NPT ensemble. Section \ref{sec:first_order} introduces two simple first-order numerical schemes and Section \ref{sec:second-order} introduces a second-order Langevin sampler that preserves the positive volume of the simulation box. Finally, we perform some numerical experiments to validate the effectiveness and applicability of our Langevin sampler across different scenarios in Section \ref{sec:experiments}.

\section{Equations of motion}\label{sec:EOM}

In this section, we first derive the zero-mass limit of Eq. \eqref{eq:liang313}, and in particular, the equations for $\b{r}$ and $\b{p}$. 
Then, we verify that under a specially designed friction coefficient $\gamma(V)$, our equations of motion are well-defined and preserve the correct NPT ensemble.

As mentioned in the introduction, the issue is that the limit of $\b{r}dV/(3V)$ and $\b{p} dV/(3V)$ are unclear. Our key observation is that we may go back to the original system of equations \eqref{eq:originalunderdampedLang}. This system of equations is not used in simulations eventually but it reflects the structure of the system. Note that the Jacobian of this change of variable for $(\b{s}, \b{p}^{\b{s}})\leftrightarrow (\b{r}, \b{p})$ is 1. One may find that the equations of motion are then given by: 
\begin{gather}\label{eq:spsVpV}
    \begin{split}
        & d\b{s} = V^{-2/3}\b{m}^{-1}\b{p}^{\b{s}} dt, \\
        & d\b{p}^{s} = -V^{1/3}\nabla_{\b{r}} U dt - \b{\gamma} \b{p}^{\b{s}} dt + V^{1/3}\sqrt{2\beta^{-1}\b{\gamma}\b{m}}\, d\b{W},\\
        &dV = \frac{p^V}{M}dt, \, dp^V = -\frac{\partial H}{\partial V}\,dt - \tilde{\gamma}(V)p^Vdt + \sqrt{2\beta^{-1}\tilde{\gamma}(V)M}\,dW_V.
    \end{split}
\end{gather}
Using the new variables, it is clear that the equations for $\b{s}$ and $\b{p}^{\b{s}}$ are much more straightforward and there are no trouble terms similar to  $\b{r}dV/(3V)$ or $\b{p} dV/(3V)$.
This gives possibility to derive the correct limit equations of motion for the NPT ensemble.

Consider the rescaled friction
\begin{gather}\label{eq:gamma}
    \gamma(V)=M\tilde{\gamma}(V),
\end{gather}
which we shall fix in the zero mass limit $M\to 0$. According to the Smoluchowski–Kramers approximation result in \cite[Theorem 1]{hottovy2015smoluchowski}, if the functions involved are sufficiently nice,  the $V$-component of the solution of Eq. \eqref{eq:spsVpV}, denoted by $V^M_t$ to indicate the dependence on $M$, converges to the solution of 
\begin{gather}\label{eq:V_general}
    dV = -\frac{1}{\gamma(V)}\left(\frac{\partial\cH}{\partial V} + \frac{1}{\beta\gamma(V)}\frac{d\gamma(V)}{dV}\right)dt + \sqrt{\frac{2}{\beta\gamma(V)}}dW_V,
\end{gather}
where 
\begin{gather}
\cH(\b{s},\b{p}^{\b{s}},V) = U(V^{1/3}\b{s}; V)  + \frac{1}{2}V^{-2/3}(\b{p}^{\b{s}})^T\b{m}^{-1}\b{p}^{\b{s}}+P_0V.
\end{gather}
Here, $\partial \cH/\partial V$ is taken by fixing $\b{s}$ so that $\cH$ also satisfies
\begin{gather}
\frac{\partial \cH}{\partial V}|_{\b{s}}=P_0-P,
\end{gather}
where $P$ is the instantaneous pressure introduced in \eqref{eq:inspressure1}.
Note that
\[
U(V^{1/3}\b{s}; V) + \frac{1}{2}V^{-2/3}(\b{p}^{s})^T\b{m}^{-1}\b{p}^{\b{s}}=U(\b{r}; V) + \frac{1}{2}\b{p}^T\b{m}^{-1}\b{p} ,
\]
so the physical meaning of $\cH$ is actually the enthalpy of the system.
Denoting the solution of Eq. \eqref{eq:V_general} by $V_t$, the Smoluchowski–Kramers approximation result in  \cite[Theorem 1]{hottovy2015smoluchowski} states that, under certain assumptions (the function $\cH$ is sufficiently nice) and the same initial condition $V^M_0 = V_0$, it holds that
\begin{gather}\label{eq:limitofV}
    \lim_{M\to 0} \mathbb{E}\left[\left(\sup_{0\leq t\leq T}\left|V^{M}_t-V_t\right|\right)^2\right] = 0.
\end{gather}
We remark that our system of equations look more complicated as there is coupling with the equations of $\b{r}$
and $\b{p}$ so the rigorous justification for our system is still unavailable. Nevertheless, we will assume \eqref{eq:limitofV} to investigate the limit of the equations for the first two variables.

By \eqref{eq:limitofV}, there is a sequence that converges almost surely under the uniform norm. We will then fix this sequence of $V_t^M$ without relabelling.  Below, we consider a simplified problem to derive the limit equations. That is, we fix a sequence of convergent $V_t^M$ under the uniform convergence norm, and consider the limit. In other words, the sequence of $V_t^M$ is thus given, and we aim to show that the $(\b{s}, \b{p}^{\b{s}})$-component of the solution of Eq. \eqref{eq:spsVpV}, denoted by $(\b{s}^M_t, \b{p}^{\b{s},M}_t)$, converges in $L^2$, with respect to the topology on $C_{\R^{6N}}([0,T])$, to the $(\b{s}, \b{p}^{\b{s}})$-component of the solution of the following equations
\begin{gather}\label{eq:spsV}
    \begin{split}
         d\b{s} &= \frac{\partial \cH}{\partial \b{p}^s}\,dt=V^{-2/3}\b{m}^{-1}\b{p}^{\b{s}} dt, \\
         d\b{p}^{\b{s}} &=-\frac{\partial \cH}{\partial \b{s}}\,dt- V^{2/3}\b{\gamma m} \frac{\partial H}{\partial\b{p}^{\b{s}}} dt + V^{1/3}\sqrt{2\beta^{-1}\b{\gamma}\b{m}}\, d\b{W} \\
         &=-V^{1/3}\nabla_{\b{r}} U dt - \b{\gamma} \b{p}^{\b{s}} dt + V^{1/3}\sqrt{2\beta^{-1}\b{\gamma}\b{m}}\, d\b{W},\\
        dV &= -\frac{1}{\gamma(V)}\left(\frac{\partial\cH}{\partial V} + \frac{1}{\beta\gamma(V)}\frac{d\gamma(V)}{dV}\right)dt + \sqrt{\frac{2}{\beta\gamma(V)}}dW_V,
    \end{split}
\end{gather} 
if they share the same initial conditions.

\begin{proposition}\label{prop:convergence}
    Assume that the potential $U$ is twice continuously differentiable and Eq. \eqref{eq:spsVpV} and Eq. \eqref{eq:spsV} share the same initial conditions. Let the probability space for $W_V$ be $(\Omega_1,\mathcal{F}_1 ,\P_1)$ and the probability space for $\b{W}$ be $(\Omega_2, \mathcal{F}_2, \P_2)$. We assume also that for $\P_1$-almost every $\omega_1\in \Omega_1$, the $V$-component of the solution of Eq. \eqref{eq:spsV} has positive upper and lower bounds (the bounds could depend on $\omega_1$), and the convergence of $V_t^M$ with respect to $\omega_2\in \Omega_2$ is uniform. Then, it holds that
    \begin{gather}\label{eq:sps_conv}
        \lim_{k\to\infty}\mathbb{E}\left[\sup_{0\leq t\leq T}\left|\b{s}^{M}_t-\b{s}_t\right|^2 + \sup_{0\leq t\leq T}\big|\b{p}^{\b{s},M}_t-\b{p}^{\b{s}}_t\big|^2 \Big| \mathcal{F}_1 \right] = 0,
    \end{gather}
    where $(\b{s}_t, \b{p}^{\b{s}}_t)$ is the $(\b{s}, \b{p}^{\b{s}})$-component of the solution of Eq. \eqref{eq:spsV}.
\end{proposition}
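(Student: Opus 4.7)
The idea is to condition on $\mathcal{F}_1$ so that, for $\P_1$-almost every $\omega_1$, both $V_t^M(\omega_1,\cdot)$ and $V_t(\omega_1,\cdot)$ become fixed continuous paths on $[0,T]$, and the $(\b{s},\b{p}^{\b{s}})$-equations in \eqref{eq:spsVpV} and \eqref{eq:spsV} are SDEs on $(\Omega_2,\mathcal{F}_2,\P_2)$ driven by the same Brownian motion $\b{W}$ whose coefficients differ only through the exogenous $V$-path plugged in. The target estimate \eqref{eq:sps_conv} then reduces to a stability-in-data result: continuous dependence of the SDE solution on uniform perturbations of the $V$-trajectory. By hypothesis, there exist $0<V_-(\omega_1)\le V_t\le V_+(\omega_1)<\infty$, and the uniform convergence of $V^M$ to $V$ traps $V^M$ inside the compact interval $[V_-/2,\,2V_+]$ for all sufficiently small $M$, on which $V\mapsto V^{1/3}$ and $V\mapsto V^{-2/3}$ are smooth and hence Lipschitz.

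\textbf{Difference equations and Gronwall.} Set $\Delta\b{s}_t:=\b{s}_t^M-\b{s}_t$ and $\Delta\b{p}_t:=\b{p}_t^{\b{s},M}-\b{p}_t^{\b{s}}$. Subtracting the integral forms, the noise in the $\b{s}$-equation cancels, while in the $\b{p}^{\b{s}}$-equation the stochastic-integral difference equals $\int_0^t [(V_u^M)^{1/3}-V_u^{1/3}]\sqrt{2\beta^{-1}\b{\gamma}\b{m}}\,d\b{W}_u$. Because $\b{s}$ lives in the unit torus, $\b{r}=V^{1/3}\b{s}$ ranges in a fixed compact set, and the $C^2$ hypothesis on $U$ makes $\nabla_{\b{r}}U$ bounded and globally Lipschitz on the admissible set. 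Combining these Lipschitz bounds with the Burkholder--Davis--Gundy inequality applied to the stochastic-integral difference, and setting $\phi(t):=\E[\sup_{u\le t}|\Delta\b{s}_u|^2+\sup_{u\le t}|\Delta\b{p}_u|^2 \mid \mathcal{F}_1]$, one obtains an inequality of the form
\[
\phi(t)\le C(\omega_1)\,\|V^M-V\|_{\infty}^2\,(1+\E[\sup_{u\le T}|\b{p}_u^{\b{s},M}|^2 \mid \mathcal{F}_1])+C(\omega_1)\int_0^t \phi(u)\,du.
\]
Gronwall's inequality then forces $\phi(T)\to 0$ as $M\to 0$, provided a uniform-in-$M$ moment bound on $\b{p}^{\b{s},M}$ is available.

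\textbf{Main obstacle.} The crucial technical input is therefore
\[
\sup_{0<M\le M_0}\E[\sup_{0\le t\le T}|\b{p}_t^{\b{s},M}|^2 \mid \mathcal{F}_1]<\infty \quad \P_1\text{-a.s.}
\]
To prove it, I would apply It\^o's formula to $|\b{p}^{\b{s},M}|^2$ and exploit the dissipative term $-2(\b{p}^{\b{s},M})^T\b{\gamma}\,\b{p}^{\b{s},M}$ against the quadratic-variation contribution $2\beta^{-1}\tr((V_t^M)^{2/3}\b{\gamma}\b{m})$, using boundedness of $V^{1/3}\nabla_{\b{r}}U$ on the admissible set (once $V^M$ is trapped and $\b{s}$ lies in the unit torus); taking $\sup_{s\le t}$, applying BDG to the martingale part and closing with Gronwall yields the required bound uniformly in small $M$. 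Combined with the previous step, this proves \eqref{eq:sps_conv}. The delicate point is precisely the $M$-uniformity: the compact set in which $V^M$ takes values, and hence every constant appearing in the bounds, must be chosen independently of $M$ for all sufficiently small $M$, which is exactly what the uniform convergence hypothesis on $V^M$ provides and why the proof must be carried out conditionally on $\mathcal{F}_1$.
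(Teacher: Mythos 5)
Your proof takes essentially the same approach as the paper: condition on $\mathcal{F}_1$ and fix $\omega_1$ so that $V^M$ and $V$ are deterministic paths, use the hypotheses to trap them in a common compact interval $[V_l,V_u]$ for all small $M$, subtract the $(\b{s},\b{p}^{\b{s}})$-equations, invoke Lipschitz bounds on $V\mapsto V^{\pm 2/3}$, $V\mapsto V^{1/3}$ and on $\nabla_{\b{r}}U$ (from the $C^2$-hypothesis and compactness of the torus), apply BDG to the stochastic-integral difference $\int_0^t[(V^M)^{1/3}-V^{1/3}]\,d\b{W}$, and close with Gr\"onwall. The one organizational difference is how you handle the cross terms: you arrange the split so that the bound requires a \emph{uniform-in-$M$} second-moment estimate $\sup_{0<M\le M_0}\E[\sup_t|\b{p}^{\b{s},M}_t|^2\mid\mathcal{F}_1]<\infty$, which you then sketch via It\^o/BDG/Gr\"onwall, whereas the paper instead writes the cross term against the \emph{limit} process $\b{p}^{\b{s}}$ (e.g.\ factoring $(V^M)^{-2/3}(\b{p}^{\b{s},M}-\b{p}^{\b{s}})+[(V^M)^{-2/3}-V^{-2/3}]\b{p}^{\b{s}}$), so only $\E[\sup_t|\b{p}^{\b{s}}_t|^2\mid\mathcal{F}_1]<\infty$ is needed, which does not involve $M$ at all. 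Both moment bounds follow from the same argument once $V^M$ is trapped, so your route is valid; the paper's choice just avoids having to track the $M$-uniformity of that bound explicitly. One small slip of phrasing: there is no noise in either $\b{s}$-equation, so ``the noise in the $\b{s}$-equation cancels'' should read ``is absent.''
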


\begin{proof}[Proof of Proposition \ref{prop:convergence}]
  
For the conditioning expectation, it is enough to fix $\omega_1\in \Omega_1$ such that $V_t^M$ converges to $V_t$ (since such $\omega_1$ has probability $1$). By the assumption, we can find $V_{u}'>V_l'>0$ such that $V_l' \le V_t\le V_u'$ for this fixed $\omega_1$. Moreover, by the assumption on the convergence of $V_t^M$, we find that for $M$ small enough, one has
$V_l:=V_l'/2\le V_t^M \le 2V_u'=:V_u$.
  Below in the proof here, we will use $\E(\cdot)$ to indicate the expectation on $\Omega_2$ by fixing such $\omega_1\in \Omega_1$, for the ease of notations.
  
    Note that $U$ is twice continuously differentiable and thus there exists $C_{0}>0$ such that 
    \begin{gather*}
        |\nabla_{\b{r}}U|, |\nabla_{\b{r}}\partial_V U|, |\nabla_{\b{r}}^2U| \leq C_{0},\, \forall V\in[V_l, V_u], \forall \b{s}\in\T_1^d.
    \end{gather*}
    where all these functions are evaluated at $(V^{1/3}\b{s}; V)$. With this, it is straightforward to find that
  \begin{gather*}
  \E \sup_{0\le t'\le t}|\b{p}^{\b{s}}_{t'}|^2\le C+C\E\int_0^t\sup_{0\le s'\le s}|\b{p}^{\b{s}}_{s'}|^2ds
  +C(T)\E\sup_{0\le t'\le t}\left|\int_0^{t'}V_s^{1/3}d\b{W}_s\right|^2
  \end{gather*}
  Applying the Burkholder-Davis-Gundy (BDG) inequality \cite{barlow1982semi, carlen1991lp}, the last term is bounded. Hence,
 $\E \sup_{0\le t'\le t}|\b{p}^{\b{s}}_{t'}|^2\le C(T)$ by Gr\"onwall's inequality. 

Due to the boundedness of these quantities, it is straightforward to find
 \begin{gather*}
        \begin{split}
            |\b{s}^{M}_t-\b{s}_t| &\le  C\int_0^t |\b{p}^{\b{s},M}_{s} - \b{p}^{\b{s}}_{s}|ds +|V^{M}_{s}-V_{s}| |\b{p}^{\b{s}}_{s}| \, ds,\\ 
            |\b{p}^{\b{s},M}_t-\b{p}^{\b{s}}_t| &\le  C\int_0^t |V^{M}_{s}-V_{s}|\,ds +C \int_0^1 |\b{s}^{M}_{s}-\b{s}_{s}|+|\b{p}^{\b{s},M}_{s}-\b{p}^{\b{s}}_{s}|\, ds\\
            & \quad\quad+ \left|\int_0^t\left[(V_s^{M})^{1/3}-V_t^{1/3}\right]\sqrt{2\beta^{-1}\b{\gamma m}}\,d\b{W}_{s}\right|.
        \end{split}
 \end{gather*}
Taking the square on both sides, taking the supremum over $t\in [0, T]$, one has
 \begin{gather*}
\begin{split}
\E\sup_{0\le t'\le t} |\b{s}^{M}_{t'}-\b{s}_{t'}|^2 &\le  C(T)\int_0^t \E \sup_{0\le s'\le s}|\b{p}^{\b{s},M}_{s'} - \b{p}^{\b{s}}_{s'}|^2ds \\
&\quad+C(T)\int_0^T\Big\|\sup_{0\le t_1\le T}|V^{M}_{t_1}-V_{t_1}|\Big\|_{\infty}^2 \E \sup_{0\le s'\le s} |\b{p}^{\b{s}}_{s'}|^2 \, ds,\\ 
 \E\sup_{0\le t'\le t} |\b{p}^{\b{s},M}_{t'}-\b{p}^{\b{s}}_{t'}|^2 &\le  C(T)\Big\|\sup_{0\le t_1\le T}|V^{M}_{t_1}-V_{t_1}|\Big\|_{\infty}^2 +C(T) \int_0^t 
 \E \sup_{0\le s'\le s} |\b{s}^{M}_{s'}-\b{s}_{s'}|^2\,ds\\
&\quad + C(T)\int_0^t\E \sup_{0\le s'\le s} |\b{p}^{\b{s},M}_{s'}-\b{p}^{\b{s}}_{s'}|^2\, ds+R_1(t),
        \end{split}
 \end{gather*}
 where
 \[
 R_1(t)=C(T) \E \sup_{0\le t'\le t}\left|\int_0^{t'}\left[(V_s^{M})^{1/3}-V_s^{1/3}\right]\sqrt{2\beta^{-1}\b{\gamma m}}\,d\b{W}_{s}\right|^2.
 \]
Applying the BDG inequality again, one finds that
\[
R_1(t)\le C(T)\E \sup_{0\le t'\le t} \int_0^{t'}|(V_s^{M})^{1/3}-V_s^{1/3}|^2\,ds\le C(T)\Big\|\sup_{0\le t_1\le T}|V^{M}_{t_1}-V_{t_1}|\Big\|_{\infty}^2.
\]
  According to the Gr\"onwall's inequality, it holds that
    \begin{gather*}
        \mathbb{E}\left[\sup_{0\leq t'\leq t}\big\|\b{s}^{M}_{t'}-\b{s}_{t'}\big\|^2 + \sup_{0\leq t'\leq t}\big\|\b{p}^{\b{s},M}_{t'}-\b{p}^{\b{s}}_{t'}\big\|^2\right] \leq C(T)\Big\|\sup_{0\le t_1\le T}|V^{M}_{t_1}-V_{t_1}|\Big\|_{\infty}^2.
    \end{gather*}
Hence, the claim holds.
\end{proof}

Note that the assumptions on the convergence of $V_t^M$ in Proposition \ref{prop:convergence} seems a little bit strong. However,
since the Smoluchowski–Kramers approximation result in  \cite[Theorem 1]{hottovy2015smoluchowski} is about the randomness brought by $W_V$, while the randomness in $\b{W}$ only affects $V$ through $\cH$. As we have assumed that $\cH$ is nice, we can expect that the assumption is reasonable. One may investigate the convergence here rigorously in the future (the problem for the case without $\b{W}$ is also interesting). 
Though this justification cannot be used as a rigorous proof, we regard it as a convincing derivation. Besides, we will start with the limit equations Eq. \eqref{eq:spsV} directly to show that it has the desired properties and it can preserve the correct NPT ensemble.

From Eq. \eqref{eq:spsV}, one can change back to the variables $\b{r}=V^{1/3}\b{s}$ and $\b{p}=V^{-1/3}\b{p}^{\b{s}}$ by applying It\^o's formula. Then, one can find that the equations for $\b{r}$ and $\b{p}$ are given by the following.
\begin{gather}\label{eq:rpV_general}
    \begin{split}
        & d\b{r} = \b{m}^{-1}\b{p}\,dt + \frac{dV}{3V}\b{r} - \frac{2}{9\beta V^2\gamma(V)}\b{r}\,dt,\\
        & d\b{p} = -\nabla_{\b{r}} U\,dt - \frac{dV}{3V}\b{p} + \frac{4}{9\beta V^2\gamma(V)}\b{p}\,dt - \b{\gamma} \b{p}\,dt + \sqrt{2\beta^{-1}\b{m}\b{\gamma}} \,d\b{W}.
    \end{split}
\end{gather}
Clearly, some extra drift terms $- \frac{2}{9\beta V^2\gamma(V)}\b{r}\,dt$ and $\frac{4}{9\beta V^2\gamma(V)}\b{p}\,dt$ originate from the quadratic variation of $V$. Expressed in terms of the side length $L=V^{1/3}$ of the cubic periodic box, the equations look more concise under the following identities:
\begin{gather*}
\frac{dL}{L} = \frac{dV}{3V} - \frac{2}{9\beta V^2\gamma(V)} \quad\text{and}\quad \frac{dL^{-1}}{L^{-1}} = - \frac{dV}{3V} + \frac{4}{9\beta V^2\gamma(V)}.
\end{gather*}
To ensure that the volume is always positive during the evolution, we apply another change of variable $\epsilon=\log V$ similar as in \cite{bernetti2020pressure}. Then, the equations of motion in our work are given by the following system of equations
\begin{gather}\label{eq:rpLV_general}
    \begin{split}
        & d\b{r} = \b{m}^{-1}\b{p}\,dt + \b{r}\frac{dL}{L},\\
        & d\b{p} = -\nabla_{\b{r}} U\,dt + \b{p}\frac{dL^{-1}}{L^{-1}} - \b{\gamma} \b{p}\,dt + \sqrt{2\beta^{-1}\b{m}\b{\gamma}} dW,\\
        & L=V^{1/3}, V = e^{\epsilon},\\
        &d\epsilon = -\frac{1}{V\gamma(V)}\left(\frac{\partial\cH}{\partial V} + \frac{1}{\beta \gamma(V)} \frac{d\gamma(V)}{d V} + \frac{1}{\beta V}\right)dt + \sqrt{\frac{2}{\beta V^2\gamma(V)}}dW_V.
    \end{split}
\end{gather}
We remark that taking $\gamma(V)=\tau_p/(\beta_TV)$ in Eq. \eqref{eq:rpLV_general}, one can recover the recently proposed stochastic cell rescaling method \cite{bernetti2020pressure}, where the evolution of $\epsilon$ is described by the following overdamped Langevin equation:
\begin{gather}\label{eq:bernettieq5}
    d\epsilon = -\frac{\beta_T}{\tau_p}\frac{\partial\cH}{\partial V} dt + \sqrt{\frac{2\beta_T}{\beta\tau_p V}}dW_V.
\end{gather}
Besides the equation for $V$, our equations of motion in \eqref{eq:rpLV_general} give the full complete dynamics.

In this work, we aim to propose an equation that guarantees the positivity of $V$ (that is, the equation for $d\epsilon$ is well-defined), and moreover an equation that is easy for discretization if we desire a second order scheme.
To this end, we choose the friction coefficient to be 
\begin{gather}\label{eq:friction}
\gamma(V) = 1/(\lambda V^2),
\end{gather}
 with $\lambda>0$ such that the equation for $\epsilon=\log V$ exhibits a constant diffusion coefficient (i.e., additive noise).
 \begin{gather}\label{eq:eps}
    d\epsilon = -\lambda\left(V\frac{\partial\cH}{\partial V} - \beta^{-1}\right)dt + \sqrt{2\lambda\beta^{-1}} dW_V.
\end{gather}
With the additive noise,  some relatively simple second-order numerical schemes can be employed for discretization, which will be discussed in Section \ref{sec:second-order}.  Moreover, with additive noise, the positivity of $V$ can be shown by assuming that $U$ is sufficiently smooth.

In the following, we verify that \eqref{eq:rpLV_general} with the choice \eqref{eq:friction}  is well-defined and our the system of equations preserves the target NPT ensemble. We start with the following assumption.
\begin{assumption}\label{ass:V2partial2H}
    For any $\delta>0$, there exist $V_{\delta}>0$ and $C_{\delta}>0$, such that for any $V\ge V_{\delta}$, it holds that
    \begin{gather}\label{eq:V2partial2H}
        V^2\left|\frac{\partial^2\mathcal{H}}{\partial V^2} \Big|_{\b{s}}\right| \leq \delta V^2 \left(\frac{\partial\mathcal{H}}{\partial V}\Big|_{\b{s}}\right)^2 + C_{\delta}(\mathcal{H}+1).
    \end{gather}
    Here, the symbol $\frac{\partial}{\partial V}\Big|_{\b{s}}$ means the partial derivative is taken by holding $\b{s}=V^{-1/3}\b{r}$ fixed.
\end{assumption}

The Assumption \ref{ass:V2partial2H} holds in our setting if $U$ is nice enough. In fact, for the periodic case, the potential $U$ is given by
\begin{gather}\label{eq:potentialU}
U(\b{r}; V) = \sum_{i<j}\sum_{\b{n}}\Phi(\b{r}_{ij} + V^{1/3}\b{n}),
\end{gather}
 where $\b{r}_{ij} = \b{r}_i - \b{r}_j$ and $\b{n}\in\mathbb{Z}^3$ ranges over the three-dimensional integer column vectors.
If the $\Phi$ decays fast enough, we can establish the following result.
\begin{lemma}
Consider the potential function in \eqref{eq:potentialU}. Suppose that $\Phi: \R^3\rightarrow\R$ is a twice continuously differentiable radially symmetric potential with the derivatives $|D^{\alpha}\Phi(\b{r})|$ decay fast enough as $|\b{r}|\to\infty$ for $|\alpha|\le 2$. Then, Assumption \ref{ass:V2partial2H} holds.
\end{lemma}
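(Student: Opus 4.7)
The plan is to establish the stronger bound $V^2|\partial_V^2\mathcal{H}|_{\b{s}}|\le C(\mathcal{H}+1)$ uniformly for $V$ sufficiently large, which implies Assumption \ref{ass:V2partial2H} for every $\delta>0$ with the $\delta V^2(\partial_V\mathcal{H})^2$ term actually superfluous in this setting. I first decompose $\mathcal{H}=U(V^{1/3}\b{s};V)+K+P_0V$ with reduced kinetic energy $K:=\tfrac{1}{2}V^{-2/3}(\b{p}^{\b{s}})^T\b{m}^{-1}\b{p}^{\b{s}}$. Clearly $\partial_V^2(P_0V)=0$, and a direct computation gives $V^2\partial_V^2 K=\tfrac{10}{9}K$. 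Setting $\b{y}_{ij,\b{n}}:=V^{1/3}(\b{s}_{ij}+\b{n})$ so that $\partial_V\b{y}_{ij,\b{n}}=\b{y}_{ij,\b{n}}/(3V)$, the chain rule applied to $\Phi$ delivers
\begin{equation*}
V^2\,\partial_V^2 U\big|_{\b{s}}=\frac{1}{9}\sum_{i<j}\sum_{\b{n}\in\mathbb{Z}^3}\psi(\b{y}_{ij,\b{n}}),\qquad \psi(\b{y}):=-2\b{y}\cdot\nabla\Phi(\b{y})+\b{y}^T\nabla^2\Phi(\b{y})\b{y}.
\end{equation*}

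The key step is showing that this lattice sum is uniformly bounded in $\b{s}\in\T_1^d$ and in $V\ge V_0$ for any fixed $V_0>0$. Since $\Phi\in C^2$, $\psi$ is continuous on $\R^3$, and the fast decay of $\nabla\Phi,\nabla^2\Phi$ forces $\psi(\b{y})\to 0$ as $|\b{y}|\to\infty$, hence $\|\psi\|_{\infty}<\infty$. For each pair $(i,j)$ I will pick $\b{n}^*=\b{n}^*(i,j)$ minimizing $|\b{s}_{ij}+\b{n}|$, so that $|\b{s}_{ij}+\b{n}^*|\le\sqrt{3}/2$: the near term is controlled by $\|\psi\|_{\infty}$, and for $\b{n}\ne\b{n}^*$ we have $|\b{y}_{ij,\b{n}}|\ge V^{1/3}(|\b{n}-\b{n}^*|-\sqrt{3}/2)$. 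A quantitative version of ``fast enough'' (e.g.\ $|\nabla\Phi(\b{y})|+|\b{y}||\nabla^2\Phi(\b{y})|=O(|\b{y}|^{-4-\varepsilon})$) then yields $|\psi(\b{y})|=O(|\b{y}|^{-3-\varepsilon})$ at infinity, so the tail is dominated by $\sum_{|\b{m}|\ge 1}|\b{m}|^{-3-\varepsilon}<\infty$ uniformly in $\b{s}$ and $V\ge V_0$. The identical reasoning gives $|U|\le C$ uniformly.

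To conclude, since $K,P_0V\ge 0$, we have $K=\mathcal{H}-U-P_0V\le\mathcal{H}+C$; furthermore, choosing $V_\delta$ large enough that $P_0V_\delta\ge C$ ensures $\mathcal{H}\ge -C+P_0V_\delta\ge 0$, so $\mathcal{H}+1\ge 1$ and constants may be absorbed. Combining,
\begin{equation*}
V^2\big|\partial_V^2\mathcal{H}\big|_{\b{s}}\big|\le V^2\big|\partial_V^2 U\big|_{\b{s}}\big|+\frac{10}{9}K\le C'+\frac{10}{9}(\mathcal{H}+C)\le C_\delta(\mathcal{H}+1)
\end{equation*}
uniformly for $V\ge V_\delta$, which verifies the assumption. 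The main obstacle I anticipate is the uniformity in $\b{s}$ of the lattice sum: besides making ``fast enough'' quantitative, one must handle the degenerate configurations where $\b{s}_i=\b{s}_j$ so that $\b{n}^*=0$ and $\b{y}_{ij,0}=0$; this is harmless because the radial symmetry of $\Phi$ forces $\nabla\Phi(0)=0$, so $\psi$ vanishes to second order at the origin and is locally bounded, meaning particle collisions in reduced coordinates cause no trouble.
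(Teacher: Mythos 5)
Your proof follows essentially the same route as the paper: directly compute $V^2\partial_V^2\cH$ at fixed $(\b{s},\b{p}^{\b{s}})$, observe that the lattice sums arising from $U$ are uniformly bounded under the decay hypotheses, and absorb the kinetic contribution into $C(\cH+1)$, rendering the $\delta V^2(\partial_V\cH)^2$ term superfluous — exactly what the paper does implicitly. You also fill in details the paper elides (a quantitative minimum-image argument for uniform convergence of the lattice sum in $\b{s}$ and $V$, the harmlessness of $\b{y}=0$ since radial symmetry gives $\nabla\Phi(0)=0$, and the correct coefficient $\tfrac{10}{9}K$ rather than the paper's $\tfrac{20}{9}K$), so the argument is sound.
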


\begin{proof}
A direct computation shows that 
\begin{gather}\label{eq:VpartialH}
V\frac{\partial\cH}{\partial V} = P_0V + \frac{1}{3} \sum_{i<j}\sum_{\b{n}}\nabla\Phi(\b{r}_{ij} + V^{1/3}\b{n})^T(\b{r}_{ij} + V^{1/3}\b{n}) - \frac{4}{3}K
\end{gather}
 and 
 \begin{multline*}
 V^2\frac{\partial^2\cH}{\partial V^2} = \frac{1}{9}\sum_{i<j}\sum_{\b{n}}(\b{r}_{ij} + V^{1/3}\b{n})^T\nabla^2\Phi(\b{r}_{ij} + V^{1/3}\b{n})(\b{r}_{ij} + V^{1/3}\b{n}) \\
 - \frac{2}{9}\sum_{i<j}\sum_{\b{n}}\nabla\Phi(\b{r}_{ij} + V^{1/3}\b{n})^T(\b{r}_{ij} + V^{1/3}\b{n}) + \frac{20}{9}K,
 \end{multline*}
  where $K = \frac{1}{2}\b{p}^T\b{m}^{-1}\b{p}$ is the kinetic energy of the system.

With the assumptions, both the series $\sum_{i<j}\sum_{\b{n}}\nabla\Phi(\b{r}_{ij} + V^{1/3}\b{n})^T(\b{r}_{ij} + V^{1/3}\b{n})$ and $\sum_{i<j}\sum_{\b{n}}(\b{r}_{ij} + V^{1/3}\b{n})^T\nabla^2\Phi(\b{r}_{ij} + V^{1/3}\b{n})(\b{r}_{ij} + V^{1/3}\b{n})$ would be bounded uniformly with respect to $V\ge V_c$ for some fixed $V_c>0$. 

Thus, for any large $V$,  the term $K$ would be bounded by $\cH$ while the other terms would be dominated by $P_0V$. 
\end{proof}

\begin{theorem}\label{thm:invariant_measure}
    Set the friction coefficient $\gamma(V)=1/(\lambda V^2)$ and consider a given initial value $(\b{r}_0, \b{p}_0, \epsilon_0)$ so that $V_0=e^{\epsilon_0}>0$. Suppose that the potential $U$ is twice continuously differentiable and Assumption \ref{ass:V2partial2H} holds. Then, Eq. \eqref{eq:rpLV_general} is well-defined and preserves a positive $V$ throughout the evolution. Furthermore, Eq. \eqref{eq:rpLV_general} has the distribution \eqref{eq:invariant_measure} as an invariant measure (in terms of the variables $(\b{r}, \b{p}, V)$). 
\end{theorem}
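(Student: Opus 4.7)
The plan is to work in the variables $(\b{s}, \b{p}^{\b{s}}, V)$ of Eq.~\eqref{eq:spsV} (or $(\b{s}, \b{p}^{\b{s}}, \epsilon)$ with $\epsilon=\log V$), in which the Hamiltonian structure is transparent, and then transfer the conclusions to the $(\b{r}, \b{p}, V)$ variables of Eq.~\eqref{eq:rpLV_general} via the change of coordinates $\b{r}=V^{1/3}\b{s}$, $\b{p}=V^{-1/3}\b{p}^{\b{s}}$, whose Jacobian for fixed $V$ is $1$. The choice $\gamma(V)=1/(\lambda V^2)$ plays two distinct roles: in the well-posedness part it makes the $\epsilon$-drift polynomial rather than singular at $V=0$, and in the invariance part it simply enters as a particular admissible friction.

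For well-posedness and positivity of $V$, I would take $\Psi := \cH(\b{s}, \b{p}^{\b{s}}, e^{\epsilon})+C_0$ (with $C_0$ large enough so that $\Psi\ge 1$) as a Lyapunov function and apply It\^o's formula. The Langevin structure of the $(\b{s}, \b{p}^{\b{s}})$-equations produces the usual cancellation between the symplectic terms, leaving only $-V^{-2/3}(\b{p}^{\b{s}})^T\b{m}^{-1}\b{\gamma}\b{p}^{\b{s}}+\beta^{-1}\tr(\b{\gamma})$. Using $\partial_{\epsilon}=V\partial_V$, the $\epsilon$-part contributes
\begin{equation*}
-\lambda\bigl(V\partial_V\cH|_{\b{s}}\bigr)^2+2\lambda\beta^{-1}\,V\partial_V\cH|_{\b{s}}+\lambda\beta^{-1}\,V^2\partial_V^2\cH|_{\b{s}}.
\end{equation*}
The first term is a quadratic sink; the linear term is absorbed by AM--GM into a small fraction of it; and the second-derivative term is controlled by Assumption~\ref{ass:V2partial2H} with $\delta$ chosen so that $\lambda\beta^{-1}\delta<\lambda/4$, leaving the residual quadratic negative. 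Altogether $\mathcal{L}\cH\le C(1+\cH)$ on $\{V\ge V_{\delta}\}$. The complement $\{0<V<V_{\delta}\}$ is handled by the smoothness of $U$ (which makes $\partial_V^2\cH$ locally bounded there) together with the additive structure of the $\epsilon$-noise. A Khasminskii-type non-explosion test then yields a pathwise unique global solution with $\epsilon_t$ finite almost surely, hence $V_t=e^{\epsilon_t}>0$ for all $t\ge 0$.

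To identify the invariant measure, I would split the generator of Eq.~\eqref{eq:spsV} as $\mathcal{L}=\mathcal{L}_{sp}+\mathcal{L}_V$, where $\mathcal{L}_{sp}$ acts on $(\b{s}, \b{p}^{\b{s}})$ with $V$ frozen and $\mathcal{L}_V$ acts on $V$ with $(\b{s}, \b{p}^{\b{s}})$ frozen. For each fixed $V$, $\mathcal{L}_{sp}$ is a standard underdamped Langevin generator with Hamiltonian $\cH(\,\cdot\,,\,\cdot\,,V)$, friction $V^{2/3}\b{\gamma m}$, and diffusion $2\beta^{-1}V^{2/3}\b{\gamma m}$; the fluctuation--dissipation relation holds, so $\mathcal{L}_{sp}^{*}\exp(-\beta\cH)=0$ pointwise in $V$. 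For each fixed $(\b{s}, \b{p}^{\b{s}})$, $\mathcal{L}_V$ is an overdamped Langevin generator whose drift contains the correction $-\gamma'(V)/(\beta\gamma(V)^2)$ designed precisely to produce the reversible density $\exp(-\beta\cH)$, which is verified by writing the probability flux as $\partial_V[(\beta\gamma(V))^{-1}\exp(-\beta\cH)]$. Summing gives $\mathcal{L}^{*}\exp(-\beta\cH)=0$, so $\exp(-\beta\cH)\,d\b{s}\,d\b{p}^{\b{s}}\,dV$ is invariant for Eq.~\eqref{eq:spsV}. Since the transformation $(\b{s},\b{p}^{\b{s}})\leftrightarrow(\b{r},\b{p})$ has unit Jacobian for fixed $V$ and Eq.~\eqref{eq:rpLV_general} is the image of Eq.~\eqref{eq:spsV} under this change of variables, pushing the measure forward yields exactly the NPT density~\eqref{eq:invariant_measure} as the invariant measure. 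The main obstacle throughout is the global control of the $V$-growth: the $\epsilon$-drift $-\lambda V\partial_V\cH$ and the It\^o correction $\lambda\beta^{-1}V^2\partial_V^2\cH$ are both a priori unbounded, and Assumption~\ref{ass:V2partial2H} is tailored precisely so that the negative quadratic $-\lambda(V\partial_V\cH)^2$ dominates them.
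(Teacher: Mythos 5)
Your proposal tracks the paper closely on both fronts: working in the canonical variables $(\b{s}, \b{p}^{\b{s}}, \epsilon)$, using the enthalpy $\cH$ together with Assumption~\ref{ass:V2partial2H} to control the It\^o correction, and verifying the stationary Fokker--Planck relation. Your generator-splitting $\cL=\cL_{sp}+\cL_V$, with the fluctuation--dissipation check for $\cL_{sp}$ and the zero-flux check for $\cL_V$, is a tidy reorganization of the paper's term-by-term computation, and the unit-Jacobian pushforward to $(\b{r},\b{p},V)$ is handled correctly.

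There is, however, a genuine gap in your positivity argument. A Khasminskii non-explosion test needs a coercive Lyapunov function, and $\cH$ is \emph{not} coercive as $\epsilon\to-\infty$: on the slice $\b{p}^{\b{s}}=\b{0}$ the kinetic contribution $\tfrac12 V^{-2/3}(\b{p}^{\b{s}})^T\b{m}^{-1}\b{p}^{\b{s}}$ vanishes identically, $P_0V\to 0$, and $U$ is only assumed $C^2$, so $\cH$ can remain bounded as $V\to 0^+$ and the level sets $\{\cH\le R\}$ contain points with arbitrarily small $V$. Your remark that $\partial_V^2\cH$ is ``locally bounded'' on $\{0<V<V_\delta\}$ does not close this: that interval is not precompact at $V=0$, so local boundedness gives no uniform control. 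The paper therefore separates the two dangers with two stopping times: $\tau^u_n$ for $V$ or $|\b{p}^{\b{s}}|$ large (handled by exactly the $\cH$-Lyapunov estimate you describe), and $\tau^l_m=\inf\{t:V_t\le 1/m\}$ for $V$ small. For the latter it works directly with the $\epsilon$-equation~\eqref{eq:deps_sps} and observes that on $\{\epsilon<0\}$ the drift is bounded \emph{below} by a constant (the kinetic term is nonnegative, $\b{s}$ lives on the torus, and $V\le 1$), so the additive Brownian motion cannot drive $\epsilon$ to $-\infty$ in finite time, giving $\lim_{m\to\infty}\P(\tau^l_m\le T)=0$. You mention the additive structure of the $\epsilon$-noise but not this one-sided drift bound, which is the essential mechanism; moreover the two stopping times must be interleaved so that the bound on $\b{p}^{\b{s}}$ supplied by $\tau^u_n$ is available when estimating the $\epsilon$-drift for $\tau^l_m$. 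Once that is added, your proof coincides with the paper's.
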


\begin{proof}

{\bf Step 1. Well-posedness and positivity of $V$}

 For any positive integers $m$ and $n$, we define the stopping times 
 \[
 \tau^u_n = \inf\{t>0: V_t \geq n \text{ and } \|\b{p}^{\b{s}}_t\|^2 \geq n\}
 \]
  and 
  \[
  \tau^l_m = \inf\{t>0: V_t \leq 1/m\}.
  \]
  Then $\tau_{m,n} := \tau^l_m\wedge\tau^u_n$ is also a stopping time.

  We first fix $m$ and verify that with probability one $V_t$ cannot diverge to $+\infty$ before the time $T$. 
    Under $\gamma(V)=1/(\lambda V^2)$, Eq. \eqref{eq:V_general} writes
    \begin{gather}\label{eq:dV_mine}
        dV = -\lambda V^2\frac{\partial\mathcal{H}}{\partial V}\,dt + 2\lambda \beta^{-1} Vdt + \sqrt{2\lambda\beta^{-1}}VdW_V.
    \end{gather}
  
 Here, we will use the variables $(\b{s}_t, \b{p}^s_t, V_t)$ as they are more suited to the underlying dissipation structure.  
 Applying It\^o's formula to $\cH(\b{s}_t, \b{p}^{\b{s}}_t, V_t)$, one has
    \begin{gather}\label{eq:dH}
    \begin{split}
        \int_0^{\tau_{m,n}\wedge t} d\cH_s =& \int_0^{\tau_{m,n}\wedge t} b(\b{s}_s, \b{p}^{\b{s}}_s, V_s)\,ds + \int_0^{\tau_{m,n}\wedge t} V^{\frac{1}{3}}_s\left(\frac{\partial\cH}{\partial \b{p}^{\b{s}}}\right)_s\cdot \sqrt{2\beta^{-1}\b{\gamma m}}d\b{W}_s\\
        &+ \int_0^{\tau_{m,n}\wedge t} \sqrt{2\lambda\beta^{-1}}V_s\left(\frac{\partial\mathcal{H}}{\partial V}\right)_s dW_{V,s},        
    \end{split}
    \end{gather}
    where 
    \begin{multline}\label{eq:drift_H}
        b(\b{s}, \b{p}^{\b{s}}, V) = -\lambda V^2\left[\left(\frac{\partial\mathcal{H}}{\partial V}\right)^2 - \beta^{-1}\frac{\partial^2\mathcal{H}}{\partial V^2}\right] + 2\lambda \beta^{-1}V\frac{\partial\mathcal{H}}{\partial V} \\
        - V^{\frac{2}{3}}
        \frac{\partial \cH}{\partial \b{p}^{\b{s}}}\cdot \b{\gamma m}\frac{\partial \cH}{\partial \b{p}^{\b{s}}}
          + \frac{3}{\beta}\sum_{i=1}^N\gamma_i,
    \end{multline}
 where $\frac{3}{\beta}\sum_{i=1}^N\gamma_i$ is $\beta^{-1}V^{2/3}\frac{\partial^2\cH}{(\partial \b{p}^{\b{s}})^2}:\b{\gamma m}$.    
    
Fix the $\delta$ in Assumption \ref{ass:V2partial2H} to be $\delta=\beta/3$.
Since $V_{s}\ge 1/m$ for $s\le \tau_{m,n}\wedge t$, then $b$ is bounded when $V\le V_{\delta}$. When $V\ge V_{\delta}$, one finds that
\begin{gather*}
\begin{split}
b &\le  - \frac{2}{3}\lambda V^2\left(\frac{\partial\mathcal{H}}{\partial V}\right)^2  + 2\lambda \beta^{-1}V\frac{\partial\mathcal{H}}{\partial V}+C(\cH+1)- V^{\frac{2}{3}} \frac{\partial \cH}{\partial \b{p}^{\b{s}}}\cdot \b{\gamma m}\frac{\partial \cH}{\partial \b{p}^{\b{s}}}
          + \frac{3}{\beta}\sum_{i=1}^N\gamma_i\\
   & \le - \frac{1}{3}\lambda V^2\left(\frac{\partial\mathcal{H}}{\partial V}\right)^2+C'(\cH+1)- V^{\frac{2}{3}} \frac{\partial \cH}{\partial \b{p}^{\b{s}}}\cdot \b{\gamma m}\frac{\partial \cH}{\partial \b{p}^{\b{s}}},
\end{split}
\end{gather*}
where the constant $C'$ is independent of $n$ (may depend on $m$).

Consequently, one has
    \begin{multline*}
\E \cH_{\tau_{m,n}\wedge t} + \E\int_0^{\tau_{m,n}\wedge t} \frac{\lambda}{3} V_s^2 \left(\frac{\partial\mathcal{H}}{\partial V}\right)_s^2 + V^{2/3} \frac{\partial \cH}{\partial \b{p}^{\b{s}}}\cdot \b{\gamma m}\frac{\partial \cH}{\partial \b{p}^{\b{s}}}\,ds\\
            \leq  C(m,T) + C(m,T)\int_0^{t} \E\cH_{\tau_{m,n}\wedge s}\,ds.
    \end{multline*}
    Applying the Gr\"onwall's inequality, one can conclude $\E \cH_{\tau_{m,n}\wedge t}\le C(m,T)$ for $t\le T$, independent of $n$,
    and consequently
    \begin{gather}\label{eq:BDGbound}
        \E\int_{0}^{\tau_n\wedge T} \frac{\lambda}{3} V_t^2 \left(\frac{\partial\mathcal{H}}{\partial V}\right)_t^2 + V^{2/3} \frac{\partial \cH}{\partial \b{p}^{\b{s}}}\cdot \b{\gamma m}\frac{\partial \cH}{\partial \b{p}^{\b{s}}}\,dt \leq C(m,T),
    \end{gather}
    with the bound independent of $n$.
    
 Applying the BDG inequality and Eq. \eqref{eq:BDGbound}, there exist $C^{\prime\prime}>0$, independent of $n$, such that
    \begin{gather}
        \begin{split}
            &\E\sup_{0\leq t\leq T}\left|\int_{0}^{\tau_{m,n}\wedge t} V^{-\frac{1}{3}}_s\b{p}^{\b{s},T}_s\sqrt{\b{\gamma}\b{m}^{-1}}d\b{W}_s\right| + \E\sup_{0\leq t\leq T}\left|\int_{0}^{\tau_{m,n}\wedge t} V_s\left(\frac{\partial\mathcal{H}}{\partial V}\right)_s dW_{V,s}\right| \leq C^{\prime\prime}.\\
        \end{split}
    \end{gather}

Taking the supremum with respect to $t\in[0, T]$ on both sides of Eq. \eqref{eq:dH}, one then has
    \begin{gather}
        \E\sup_{0\leq t\leq T}\cH_{\tau_{m,n}\wedge t} \leq C(m,T) + C(m,T)\int_{0}^{T} \E\sup_{0\leq s\leq t}\cH_{\tau_{m,n}\wedge s}\,dt.
    \end{gather}
 This means that $\E\sup_{0\leq t\leq T}\cH_{\tau_{m,n}\wedge t}\le C(m, T)$ independent of $n$ by Gr\"onwall's inequality. 
Since the potential is bounded, one thus concludes by Markov's inequality that
    \begin{gather}
        \lim_{n\to\infty}\mathbb{P}(\tau_{n}^u \leq T\wedge \tau_m^l) = 0.
    \end{gather}
 The event $\{\tau_{n}^u \leq T\wedge \tau_m^l \}$ is decreasing as $n\to\infty$, so $V$ and $|\b{p}^{\b{s}}|$ will not go to $+\infty$ in finite time with probability one.

For the stopping time $\tau_m^l$, we need to verify that with probability one, $V_t$ cannot tend to 0 in the time interval $[0, T]$. 
For this purpose, we consider Eq. \eqref{eq:eps}, or
    \begin{multline}\label{eq:deps_sps}
        d\epsilon = \frac{\lambda}{3}\left[V^{-\frac{2}{3}}\b{p}^{\b{s},T}\b{m}^{-1}\b{p}^{\b{s}} - V^{\frac{1}{3}}\b{s}^T\nabla_{\b{r}}U - 3V\left(\frac{\partial U}{\partial V}\Big|_{\b{s}} + P_0\right)\right]dt \\
        + \lambda\beta^{-1}dt + \sqrt{2\lambda\beta^{-1}} dW_V.
    \end{multline}
When $\epsilon<0$ or $V<1$, 
\[
\frac{\lambda}{3}\left[V^{-\frac{2}{3}}\b{p}^{\b{s},T}\b{m}^{-1}\b{p}^{\b{s}} - V^{\frac{1}{3}}\b{s}^T\nabla_{\b{r}}U - 3V\left(\frac{\partial U}{\partial V}\Big|_{\b{s}} + P_0\right)\right]\ge -C_1,
\]
which means that the drift is bounded below. Since the noise $\sqrt{2\lambda\beta^{-1}} dW_V$ is additive, it is easy to show that 
\[
\lim_{m\to\infty}\mathbb{P}\left[\tau^l_m \leq T\right] = 0.
\]
The event $\{\tau^l_m \leq T\}$ is decreasing as $m\to\infty$, and thus $V_t = e^{\epsilon_t}$ cannot tend to 0 in finite time with probability one. 
    
Since the system of equations is well-posed for $(\b{s}, \b{p}^s, \epsilon)$, $V=e^{\epsilon}$ is thus positive for any fixed time interval.

\noindent {\bf Step 2. Invariant measure}

    We now prove the preservation of the distribution \eqref{eq:invariant_measure} under Eq. \eqref{eq:rpLV_general}. 
    Here, we will verify the claim using variables $(\b{s}, \b{p}^s, \epsilon)$. The probability distribution for NPT ensemble under the new variables $(\b{s}, \b{p}^s, \epsilon)$ is rewritten as 
\begin{gather*}
    d\mu \propto \exp\left(-\beta\cH(e^{\epsilon/3}\b{s},e^{-\epsilon/3}\b{p}^{\b{s}},e^{\epsilon})\right)e^{\epsilon}d\b{s}d\b{p}^{\b{s}}d\epsilon,
\end{gather*}
where the extra $e^{\epsilon}=\frac{dV}{d\epsilon}$ originates from the change of variable $V\to e^{\epsilon}$. 
The Fokker-Planck equation for the variables $(\b{s}, \b{p}^s, \epsilon)$ (the first two equations in \eqref{eq:spsV} and \eqref{eq:eps}) is given by
\begin{multline}
\partial_t\rho=
        - \nabla_{\b{s}}\cdot\left(e^{-\frac{2\epsilon}{3}}\b{m}^{-1}\b{p}^{\b{s}}\rho \right) - \nabla_{\b{p}^{\b{s}}}\cdot\left(\left(-e^{\frac{\epsilon}{3}}\nabla_{\b{r}} U - \b{\gamma} \b{p}^{\b{s}}\right)\rho \right)\\
        - \partial_{\epsilon}\left[-\lambda \left(V\frac{\partial\cH}{\partial V} - \beta^{-1}\right)\rho \right]
        + \beta^{-1}\nabla^2_{\b{p}^{\b{s}}}:\left(\b{m}\b{\gamma}\rho \right) + \lambda \beta^{-1}\partial_{\epsilon}^2\rho.
\end{multline}

One can easily compute that
\begin{gather*}
    \begin{split}
        & -\nabla_{\b{s}}\cdot\left(e^{-\frac{2\epsilon}{3}}\b{m}^{-1}\b{p}^{\b{s}}e^{-\beta\cH}e^{\epsilon}\right) - \nabla_{\b{p}^{\b{s}}}\cdot\left(\left(-e^{\frac{\epsilon}{3}}\nabla_{\b{r}} U - \b{\gamma} \b{p}^{\b{s}}\right)e^{-\beta\cH}e^{\epsilon}\right)\\
        =&e^{-\beta\cH}e^{\epsilon}\left(3\sum_i \gamma_i -\beta e^{-\frac{2\epsilon}{3}}\b{p}^{\b{s},T}\b{\gamma} \b{m}^{-1}\b{p}^{\b{s}}\right),\\
        & \beta^{-1}\nabla^2_{\b{p}^{\b{s}}}:\left(\b{m}\b{\gamma}e^{-\beta\cH}e^{\epsilon}\right) = e^{-\beta\cH}e^{\epsilon}\left(-3\sum_i \gamma_i + \beta e^{-\frac{2\epsilon}{3}}\b{p}^{\b{s},T}\b{\gamma} \b{m}^{-1}\b{p}^{\b{s}}\right)\\
    \end{split}
\end{gather*}
Hence, 
\[
-\nabla_{\b{s}}\cdot\left(e^{-\frac{2\epsilon}{3}}\b{m}^{-1}\b{p}^{\b{s}}e^{-\beta\cH}e^{\epsilon}\right) - \nabla_{\b{p}^{\b{s}}}\cdot\left(\left(-e^{\frac{\epsilon}{3}}\nabla_{\b{r}} U - \b{\gamma} \b{p}^{\b{s}}\right)e^{-\beta\cH}e^{\epsilon}\right)
+\beta^{-1}\nabla^2_{\b{p}^{\b{s}}}:\left(\b{m}\b{\gamma}e^{-\beta\cH}e^{\epsilon}\right)=0.
\]
Moreover
\begin{gather*}
    \lambda\beta^{-1}\partial_{\epsilon}\left(e^{-\beta\cH}e^{\epsilon}\right) = -\lambda \left(e^{\epsilon}\frac{\partial\cH}{\partial V} - \beta^{-1}\right)e^{-\beta\cH}e^{\epsilon}.
\end{gather*}
Hence,
\[
- \partial_{\epsilon}\left[-\lambda \left(V\frac{\partial\cH}{\partial V} - \beta^{-1}\right)e^{-\beta\cH}e^{\epsilon} \right]+ \lambda \beta^{-1}\partial_{\epsilon}^2(e^{-\beta\cH}e^{\epsilon})=0.
\]
Therefore, these terms add to zero so that $e^{-\beta\cH}e^{\epsilon}$ is indeed a stationary solution of the Fokker-Planck equation, which indicates that \eqref{eq:invariant_measure} is the invariant measure of Eq. \eqref{eq:rpLV_general}. 
\end{proof}

Note that the equations for $\b{r}$ and $\b{p}$ do not appear in \cite{bernetti2020pressure}, and the schemes for $\b{r}$ and $\b{p}$ follow from a simple cell rescaling. In this sense, our system of equations of motion \eqref{eq:rpLV_general} concurrently substantiates the validity of the cell rescaling method. This method, though works OK in practice, cannot guarantee that the volume is positive.

\section{Some first-order schemes for the NPT ensemble}\label{sec:first_order}

In this section, we consider some naive discretization of Eq. \eqref{eq:rpLV_general}, which would be some first-order schemes. 
These schemes, though cannot guarantee the positivity of the volume, might be used in practice due to their simplicity.

Firstly, we recall the weak order in numerical SDE. 
\begin{definition}
A method with the one-step propagator $\mathcal{S}^*$ is said to have weak order $p\ge 1$ if for any smooth test function $\varphi\in C_p^{\infty}$ and initial density $\rho_0$ that has all orders of moments, one has
\begin{gather*}
|\langle \varphi, \mathcal{S}^*\rho_0-e^{\Delta t\cL^*}\rho_0\rangle|
=|\langle (\mathcal{S}-e^{\Delta t\cL})\varphi, \rho_0\rangle |\le C_{\varphi}\Delta t^{p+1}.
\end{gather*}
The method is said to converge with weak order $p$ if for all smooth test function $\varphi\in C_p^{\infty}$ and initial density $\rho_0$,  one has
\begin{gather*}
\sup_{n: n\Delta t\le T}|\langle \varphi, (\mathcal{S}^*)^n\rho_0-e^{n\Delta t\cL^*}\rho_0\rangle|
=\sup_{n: n\Delta t\le T}|\langle (\mathcal{S}^n-e^{n\Delta t\cL})\varphi, \rho_0\rangle |\le C_{\varphi}\Delta t^p.
\end{gather*}
\end{definition}

\subsection*{First order scheme \uppercase\expandafter{\romannumeral1}}
Applying the Euler-Maruyama scheme to the equation for $V$ and approximating $dL/L$ and $dL^{-1}/L^{-1}$ by $(L_{n+1}-L_{n})/L_{n}$ and $(L_{n+1}^{-1}-L_{n}^{-1})/L_{n}^{-1}$, respectively, Eq. \eqref{eq:rpLV_general} is then discretized as
\begin{gather}\label{eq:EM_Vrp}
    \begin{split}
        V_{n+1} =& V_{n}-\frac{\Delta t}{\gamma(V_{n})}\left[\left(\frac{\partial\cH}{\partial V}\right)_n + \frac{1}{\beta \gamma(V_{n})}\frac{d\gamma(V_n)}{dV}\right] + \sqrt{\frac{2\Delta t}{\beta \gamma(V_{n})}} \,z^V_n,\\
        \b{r}_{n+1}=&\b{m}^{-1}\b{p_{n}} \Delta t + \b{r_{n}}\frac{L_{n+1}}{L_{n}},\quad  L_{n+1} =V_{n+1}^{1/3}, \\
        \b{p}_{n+1}=&-\left(\nabla_{\b{r}}U\right)_n \Delta t + \b{p_{n}}\frac{L_{n}}{L_{n+1}}-\b{\gamma}\b{p_{n}}\Delta t + \sqrt{2/\beta\Delta t\b{\gamma m}}\,\b{z}_n,\\
    \end{split}
\end{gather}
where $\left(\frac{\partial\cH}{\partial V}\right)_n := \frac{\partial \cH}{\partial V}(\b{r_{n}}, \b{p_{n}}; V_n)$ and $\left(\nabla_{\b{r}}U\right)_n := \nabla_{\b{r}}U(\b{r_{n}}; V_n)$ . 

The Scheme \eqref{eq:EM_Vrp}, although discretized naively using the Euler-Maruyama scheme, coincides with the stochastic cell rescaling method \cite{bernetti2020pressure} in the sense that the effects of the terms $\b{r_{n}}\frac{L_{n+1}}{L_{n}}$ and $\b{p_{n}}\frac{L_{n}}{L_{n+1}}$ in Scheme \eqref{eq:EM_Vrp} correspond to the position-scaling and velocity-scaling steps in the stochastic cell rescaling method. Note that the equations for $\b{r}$ and $\b{p}$ do not appear in \cite{bernetti2020pressure}, and the schemes for $\b{r}$ and $\b{p}$ follow from a simple cell rescaling. In this sense, our system of equations of motion, Eq. \eqref{eq:rpLV_general}, concurrently substantiates the validity of the cell rescaling method. 
By applying the idea of operator splitting to the equations for $\b{r}$ and $\b{p}$, one can recover the Trotter integrator discussed in \cite{bernetti2020pressure}, which is also a first-order scheme. 
\begin{gather}\label{eq:Trotter}
    \begin{split}
        \widetilde{\b{p}_{n+\frac{1}{2}}} &= e^{-\frac{\Delta t}{2}\b{\gamma}}\b{p}_{n} + \sqrt{(1-e^{-\Delta t\b{\gamma}})\beta^{-1}\b{m}}\,\b{z}_n^{(1)},\\
        \b{p}_{n+\frac{1}{2}} &= \widetilde{\b{p}_{n+\frac{1}{2}}}-\frac{\Delta t}{2}\nabla_{\b{r}}U(\b{r}_n),\\
        V_{n+1} &= V_{n}-\frac{\Delta t}{\gamma(V_{n})}\left[\left(\frac{\partial\cH}{\partial V}\right)_n + \frac{1}{\beta \gamma(V_{n})}\frac{d\gamma(V_n)}{dV}\right] + \sqrt{\frac{2\Delta t}{\beta \gamma(V_{n})}} \,z^V_n,\\
        \b{r}_{n+\frac{1}{2}} &= \b{r}_n + \frac{\Delta t}{2}\b{m}^{-1}\b{p}_{n+\frac{1}{2}},\\
        \b{p}_{n+\frac{1}{2}}^{\star} &= \b{p}_{n+\frac{1}{2}}\frac{L_n}{L_{n+1}}, \quad \b{r}_{n+\frac{1}{2}}^{\star} = \b{r}_{n+\frac{1}{2}}\frac{L_{n+1}}{L_n}, \quad L_{n+1} = V_{n+1}^{1/3},\\
        \b{r}_{n+1} &= \b{r}_{n+\frac{1}{2}}^{\star} + \frac{\Delta t}{2}\b{m}^{-1}\b{p}_{n+\frac{1}{2}}^{\star},\\
        \widetilde{\b{p}_{n+1}} &= \b{p}_{n+\frac{1}{2}}^{\star}-\nabla_{\b{r}}U(\b{r}_{n+1})\frac{\Delta t}{2},\\
        \b{p}_{n+1} &= e^{-\frac{\Delta t}{2}\b{\gamma}}\widetilde{\b{p}_{n+1}} + \sqrt{(1-e^{-\Delta t\b{\gamma}})\beta^{-1}\b{m}}\,\b{z}_n^{(2)}.
    \end{split}
\end{gather}

Viewing the dynamics of $(\b{r}, \b{p}, V)$ as a single SDE, one can readily conclude the weak first-order convergence of Scheme \eqref{eq:EM_Vrp} and the Trotter integrator. 
\begin{proposition}\label{prop:weak1}
 Assume that $U$ is a twice continuously differentiable function. The weak order of  both Scheme \eqref{eq:EM_Vrp} and the Trotter integrator (Scheme \eqref{eq:Trotter}) is 1. 
\end{proposition}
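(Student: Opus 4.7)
The plan is to verify the pointwise one-step bound $|(\mathcal{S}-e^{\Delta t\cL})\varphi(x)|\le C_\varphi(1+|x|_*^q)\Delta t^2$ for smooth test functions $\varphi$, so that integrating against $\rho_0$ yields $|\langle(\mathcal{S}-e^{\Delta t\cL})\varphi,\rho_0\rangle|\le C_\varphi\Delta t^2$ and hence weak order one in the sense of the definition preceding the proposition. I would first rewrite Eq.\ \eqref{eq:rpLV_general} as an It\^o SDE in $X=(\b{r},\b{p},V)$ with generator $\cL$, and compare the one-step propagator $\mathcal{S}$ with the Taylor expansion $e^{\Delta t\cL}\varphi=\varphi+\Delta t\,\cL\varphi+\tfrac{1}{2}\Delta t^2\cL^2\varphi+\cdots$, which is well-defined since $U\in C^2$ renders the drift $C^1$ and the diffusion smooth in the interior $V>0$.

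For Scheme \eqref{eq:EM_Vrp}, the volume update is a genuine Euler--Maruyama step whose local weak error for functionals of $V$ alone is $O(\Delta t^2)$ by Milstein's classical estimate. The only novelty is the multiplicative scaling of $\b{r}$ and $\b{p}$ by $L_{n+1}/L_n$ and $L_n/L_{n+1}$. I would expand
\[
L_{n+1}/L_n = (V_{n+1}/V_n)^{1/3} = 1 + \tfrac{1}{3V_n}(V_{n+1}-V_n) - \tfrac{1}{9V_n^2}(V_{n+1}-V_n)^2 + O((V_{n+1}-V_n)^3),
\]
and similarly for its reciprocal, substitute into the updates of $\b{r}_{n+1},\b{p}_{n+1}$, Taylor-expand $\varphi$, and take conditional expectations. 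Using $\E[(V_{n+1}-V_n)^2\mid X_n]=2\beta^{-1}\gamma(V_n)^{-1}\Delta t+O(\Delta t^2)$, the $O(\Delta t)$ contribution reproduces exactly $\cL\varphi(X_n)$, including the It\^o-correction drifts $-\tfrac{2}{9\beta V^2\gamma(V)}\b{r}$ and $\tfrac{4}{9\beta V^2\gamma(V)}\b{p}$ that the quadratic variation of $V$ forces on the equations for $\b{r}$ and $\b{p}$ (cf.\ \eqref{eq:rpV_general}). The residual is of order $\Delta t^2$ with a constant of polynomial growth in $(|\b{r}|,|\b{p}|,V,V^{-1})$.

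For the Trotter integrator \eqref{eq:Trotter}, I would decompose $\cL=\cL_O+\cL_B+\cL_A+\cL_V+\cL_S$ into sub-generators for the OU damping on $\b{p}$, the kick by $-\nabla_{\b{r}}U$, the free streaming of $\b{r}$, the overdamped volume step, and the linear rescaling of $(\b{r},\b{p})$ by $L_{n+1}/L_n$. Each sub-step either exactly integrates its sub-generator (its propagator is literally $e^{\Delta t_j\cL_j}$) or is an Euler--Maruyama step (the volume sub-step) whose weak truncation error is $O(\Delta t^2)$ by the argument above. Composing via the Lie--Trotter rule and expanding via the Baker--Campbell--Hausdorff lemma, the full propagator matches $e^{\Delta t\cL}$ to first order in $\Delta t$ with residual $O(\Delta t^2)$.

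The main obstacle is the unboundedness of coefficients: the denominators $V^{-1},V^{-2}$ in the scaling expansion and in the It\^o corrections blow up as $V\downarrow 0$, while $V^2\partial_V\cH$ grows with $V$. Consequently the $\Delta t^2$ remainder carries both positive and inverse powers of $V$, and pairing with $\rho_0$ requires control of both positive and inverse moments of $V$ under $\rho_0$---a natural strengthening of ``all orders of moments'' appropriate to the NPT setting. In practice this is automatic if $\rho_0$ is supported on $\{V\ge\eta_0\}$ for some $\eta_0>0$; alternatively one can localize via the stopping-time device $\tau^l_m\wedge\tau^u_n$ from Step 1 of the proof of Theorem \ref{thm:invariant_measure} before passing to the integrated estimate. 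This moment bookkeeping is the technically heaviest part of the argument; the Taylor and BCH computations themselves are entirely routine.
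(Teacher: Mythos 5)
Your proposal is correct and develops in detail the idea the paper only states in passing: the paper's entire ``proof'' is the remark that the volume equation is discretized by Euler--Maruyama, which caps the weak order at one regardless of how carefully the other sub-steps are handled, after which the authors ``skip the details.'' You supply exactly those details, and the central computation — expanding $L_{n+1}/L_n=(1+\delta)^{1/3}=1+\tfrac13\delta-\tfrac19\delta^2+O(\delta^3)$ with $\delta=(V_{n+1}-V_n)/V_n$ and checking that $\E[(V_{n+1}-V_n)^2\mid X_n]$ reproduces the It\^o drift corrections $-\tfrac{2}{9\beta V^2\gamma(V)}\b{r}$ and $\tfrac{4}{9\beta V^2\gamma(V)}\b{p}$ of Eq.\ \eqref{eq:rpV_general} — is the right consistency check and is carried out correctly.

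Two small caveats worth flagging. First, in your Trotter analysis the five-way split $\cL=\cL_O+\cL_B+\cL_A+\cL_V+\cL_S$ is not quite a bona fide generator decomposition: the ``rescaling'' $\cL_S$ is not the generator of an autonomous flow (with $V$ frozen it would do nothing), and its propagator is not literally $e^{\Delta t\cL_S}$. The cleaner viewpoint, matching Lemma \ref{lem:ds=0}, is that the volume EM step plus the rescaling \emph{together} constitute the one-step approximation of the single barostat generator $\cL_B$, and the fact that a drift half-step is wedged between them in Scheme \eqref{eq:Trotter} only contributes an $O(\Delta t^2)$ commutator error; either framing yields the order-one conclusion, but the sub-generator language should be adjusted. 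Second, as the paper itself stresses, neither scheme guarantees $V_{n+1}>0$, so $L_{n+1}=V_{n+1}^{1/3}$ may be undefined on a set of small but positive probability; your localization via the stopping times $\tau^l_m\wedge\tau^u_n$ (or assuming $\rho_0$ supported away from $V=0$ and restricting to the event that the chain stays in a compact set) is the right device, but it needs to be stated up front so that the one-step error estimate is even well-posed.
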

This is because the equation for $V$ is directly discretized using the Euler-Maruyama scheme, even though the Trotter integrator evolves the thermostat part of Eq. \eqref{eq:rpLV_general} exactly and discretizes the Hamiltonian system part using the second-order velocity Verlet algorithm. We skip the details of the verification.

\subsection*{First order scheme \uppercase\expandafter{\romannumeral2}}

As we have seen in \eqref{eq:rpV_general}, in the limit equations for $\b{r}$ and $\b{p}$, there are extra drifts arising from the $dV/(3V)$ term. However, it is possible to choose a suitable time point in $V$ such that the scheme looks like the original equation \eqref{eq:liang313}. In particular, we aim to seek $V_{\b{r}}^{\star}$ and $V_{\b{p}}^{\star}$ such that the following numerical scheme is a consistent scheme to the equations of motion. 
\begin{gather}\label{eq:rpV_temp}
    \begin{split}
        &\b{r}_{n+1}=\b{r}_{n}+\b{m}^{-1}\b{p}_{n}\Delta t+\b{r}_{n}\frac{V_{n+1}-V_{n}}{3V_{\b{r}}^{\star}},\\
        &\b{p}_{n+1}=\b{p}_{n}-\nabla_{\b{r}}U\Delta t-\b{p}_{n}\frac{V_{n+1}-V_{n}}{3V_{\b{p}}^{\star}}-\b{\gamma p}_{n}\Delta t+\sqrt{2/\beta\b{m\gamma}\Delta t}\,\b{z}_n,\\
        &V_{n+1} = V_{n}-\frac{\Delta t}{\gamma(V_{n})}\left[\left(\frac{\partial \cH}{\partial V}\right)_n + \frac{1}{\beta \gamma(V_{n})}\frac{d\gamma(V_n)}{dV}\right] + \sqrt{\frac{2\Delta t}{\beta \gamma(V_{n})}} \,z^V_n. \\
    \end{split}
\end{gather}

In particular, we choose $  V_{\b{r}}^{\star} =V(s_{\b{r},n})$ with $s_{\b{r},n}=(1-u_{\b{r}})t_n+u_{\b{r}}t_{n+1}$ for some $u_{\b{r}}\in(0,1]$. 
By Eq. \eqref{eq:rpV_general},
\begin{gather*}
     V(s_{\b{r},n})\approx V_n -\frac{u_r \Delta t}{\gamma(V_n)} \left[\left(\frac{\partial \cH}{\partial V}\right)_n+\frac{1}{\beta \gamma(V_n)}\frac{d\gamma(V_n)}{dV} - \beta^{-1}\right] + \sqrt{\frac{2}{\beta \gamma(V_n)}} \left(W_{s_{\b{r},n}}-W_{t_n}\right), 
\end{gather*}
Comparing Scheme \eqref{eq:rpV_temp} with Eq. \eqref{eq:rpV_general}, we desire to leading order the following holds
\begin{gather}
\frac{V_{n+1}-V_n}{3V_{\b{r}}^{\star}} \approx \frac{V_{n+1}-V_n}{3V_n} - \frac{2}{9\beta V_{n}^2\gamma(V_n)}\Delta t.
\end{gather}
Substituting $V_{\b{r}}^{\star}$ into the above equation, one thus requires the following
\begin{gather*}
    \begin{split}
        &3V_n^2\left(-\Delta t\left[\left(\frac{\partial \cH}{\partial V}\right)_n+\frac{1}{\beta \gamma(V_{n})}\frac{d\gamma(V_n)}{dV}\right] + \sqrt{2/\beta\gamma(V_n)}\left(W_{t_{n+1}}-W_{t_n}\right)\right)\\
        \approx &\left(V_n -\frac{u_{\b{r}}\Delta t}{\gamma(V_n)} \left[\left(\frac{\partial \cH}{\partial V}\right)_n+\frac{1}{\beta \gamma(V_n)}\frac{d\gamma(V_n)}{dV} - \beta^{-1}\right] + \sqrt{\frac{2}{\beta \gamma(V_n)}} \left(W_{s_{\b{r},n}}-W_{t_n}\right)\right)\\
        &\times \left[3V_n\left(-\Delta t\left[\left(\frac{\partial \cH}{\partial V}\right)_n + \frac{1}{\beta \gamma(V_{n})}\frac{d\gamma(V_n)}{dV}\right] + \sqrt{\frac{2}{\beta \gamma(V_n)}}\left(W_{t_{n+1}}-W_{t_n}\right)\right) - \frac{2\Delta t}{\beta\gamma(V_n)}\right].
    \end{split}
\end{gather*}
Since $\left(W_{s_{\b{r},n}}-W_{t_n}\right)\left(W_{t_{n+1}}-W_{t_n}\right)\to u_{\b{r}}\Delta t$, on thus finds 
\begin{gather}
u_{\b{r}}=1/3.
\end{gather}
 Therefore, we choose $V_{\b{r}}^{\star}=V_{n+1/3}$. Similarly, one can choose $V_{\b{p}}^{\star}=V_{n+2/3}$ for the second equation to be consistent.

The scheme \eqref{eq:rpV_temp} is interesting in the sense that it is consistent for both $M>0$ and $M\to 0$.
In this sense, the discretization in $\b{r}$ and $\b{p}$ can preserve the asymptotics.

These two schemes here are interesting in their own sense. The disadvantages are that they are only first order and there is no theoretic guarantee of the positivity of the volume. In the next section, we will choose a suitable friction function $\gamma(V)$ and propose our scheme based on the equations of motion.

\section{A second-order Langevin sampler for NPT ensemble preserving positive volume}\label{sec:second-order}

In this section, we propose a second-order Langevin sampler for the NPT ensemble based on the result in Theorem \ref{thm:invariant_measure}, which ensures that the volume of the periodic box is always positive throughout the simulation. 
In particular, the equations of motion would be \eqref{eq:rpLV_general} with the equation for $\epsilon$ specifically given by \eqref{eq:eps}. As mentioned, the key observation is that the noise in \eqref{eq:eps} is additive, and our approach will be based on the operator splitting \cite{lapidus1981generalization, leimkuhler2013robust}. In fact, with multiplicative noise, the Taylor expansion of the conditional expectation of test functions would involve derivative terms of the diffusion coefficient, making it challenging to design a second-order scheme. 

Note that Eq. \eqref{eq:rpLV_general} has the following three main parts, which represent different physics in the equations: 
\begin{enumerate}[(1)]
    \item Hamiltonian dynamics
        \begin{flalign}\label{eq:hamiltonian}
        &\text{(H)}& 
       \begin{cases}
         &  d\b{r} = \b{m}^{-1}\b{p}\,dt,\\
          &  d\b{p} = -\nabla_{\b{r}}U\,dt.
       \end{cases}
& \hspace{18em} 
    \end{flalign}

    \item Thermostat
    \begin{flalign}\label{eq:thermostat}
        &\text{(T)}& 
          d\b{p} = -\b{\gamma p}\,dt + \sqrt{2\b{m\gamma} \beta^{-1}}\,d\b{W}.
        & \hspace{12em} 
           \end{flalign}

    \item Barostat and stochastic rescaling
    \begin{flalign}\label{eq:barostat}
        &\text{(B)}& 
          \begin{cases}
            &d\epsilon = -\lambda\left(V\frac{\partial \cH}{\partial V} - \beta^{-1}\right)dt + \sqrt{2\lambda\beta^{-1}} dW_V,\\
            &d\b{r} = \b{r}\frac{dL}{L},\, d\b{p} = \b{p}\frac{dL^{-1}}{L^{-1}}, \text{ with } L = e^{\epsilon/3}.
        \end{cases}
 & \hspace{6em} 
    \end{flalign}
     
\end{enumerate}

The Barostat and stochastic rescaling part (i.e. (B) ) looks complicated at the first glance but it in fact has a simple dynamics due to the following observation.
\begin{lemma}\label{lem:ds=0}
    The reduced coordinates $\b{s} = L^{-1}\b{r}$ and the corresponding momenta $\b{p}^{\b{s}} = L\b{p}$ are invariant under the third part, Eq. \eqref{eq:barostat}, which means
    \begin{gather}\label{eq:ds=0}
        d\b{s} = \b{0}, \quad d\b{p}^{\b{s}} = \b{0}.
    \end{gather}
\end{lemma}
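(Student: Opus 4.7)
The plan is to apply It\^o's formula to the change of variables $\b{s}=L^{-1}\b{r}$ and $\b{p}^{\b{s}}=L\b{p}$, treating $L=e^{\epsilon/3}$ as a driven by the scalar SDE for $\epsilon$ in \eqref{eq:barostat}. Since the noise in the $\epsilon$-equation is additive, every relevant quadratic variation comes from the single Wiener process $W_V$, which makes the computation clean.

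First I would use It\^o's formula on $L=e^{\epsilon/3}$ and $L^{-1}=e^{-\epsilon/3}$. Writing the drift in the $\epsilon$-equation as $\mu_\epsilon\,dt$ and the diffusion coefficient as $\sigma_\epsilon=\sqrt{2\lambda\beta^{-1}}$, one obtains
\[
\frac{dL}{L}=\tfrac{1}{3}d\epsilon+\tfrac{\lambda\beta^{-1}}{9}\,dt,\qquad \frac{dL^{-1}}{L^{-1}}=-\tfrac{1}{3}d\epsilon+\tfrac{\lambda\beta^{-1}}{9}\,dt,
\]
where the $\lambda\beta^{-1}/9$ pieces are the It\^o corrections from the quadratic variation $(d\epsilon)^2=2\lambda\beta^{-1}dt$. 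The key observation is that these two corrections have the \emph{same sign}, so they do not cancel simply by multiplying $L\cdot L^{-1}$; something else must absorb them, and that ``something else'' will turn out to be a cross-variation.

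Next I would apply the It\^o product rule to $\b{s}=L^{-1}\b{r}$ and $\b{p}^{\b{s}}=L\b{p}$. For $\b{s}$,
\[
d\b{s}=L^{-1}d\b{r}+\b{r}\,dL^{-1}+d[L^{-1},\b{r}].
\]
Substituting $d\b{r}=\b{r}(dL/L)$ and the expressions above, the drifts involving $\mu_\epsilon$ cancel between $L^{-1}d\b{r}$ and $\b{r}\,dL^{-1}$, and the $dW_V$ martingale parts also cancel. The remaining deterministic contribution from $L^{-1}d\b{r}+\b{r}\,dL^{-1}$ is $\tfrac{2\lambda\beta^{-1}}{9}L^{-1}\b{r}\,dt$, while the bracket term satisfies
\[
d[L^{-1},\b{r}]=\left(-\tfrac{L^{-1}\sigma_\epsilon}{3}\right)\!\left(\tfrac{\b{r}\sigma_\epsilon}{3}\right)dt=-\tfrac{2\lambda\beta^{-1}}{9}L^{-1}\b{r}\,dt,
\]
so $d\b{s}=0$. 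The argument for $d\b{p}^{\b{s}}=0$ is identical in structure: the martingale parts cancel, the $\mu_\epsilon$ drifts cancel, the leftover $\tfrac{2\lambda\beta^{-1}}{9}L\b{p}\,dt$ from the It\^o corrections on $L$ and $L^{-1}$ is exactly offset by the cross variation $d[L,\b{p}]=-\tfrac{2\lambda\beta^{-1}}{9}L\b{p}\,dt$.

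The main subtlety — really the whole content of the lemma — is bookkeeping of the three correction terms (two It\^o corrections from $e^{\pm\epsilon/3}$ and one cross-variation between the $L^{\pm 1}$ factor and the $\b{r}$ or $\b{p}$ factor) and noticing that they must conspire to zero for $\b{s}$ and $\b{p}^{\b{s}}$ to remain constant. This is precisely the reason the formulation of the $(\b{r},\b{p})$-equations in \eqref{eq:barostat} uses the stochastic differentials $dL/L$ and $dL^{-1}/L^{-1}$ instead of the pathwise differentials; writing them in the na\"{\i}ve Stratonovich-looking form would not give $d\b{s}=d\b{p}^{\b{s}}=0$. No approximation or Gr\"onwall argument is needed; the identity is exact.
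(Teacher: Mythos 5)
Your proof is correct and follows essentially the same route as the paper: Itô's product rule applied to $\b{s}=L^{-1}\b{r}$ and $\b{p}^{\b{s}}=L\b{p}$, with the cross-variation term absorbing the Itô corrections. The only cosmetic difference is that the paper works abstractly with $dL^{-1}=-L^{-2}\,dL+L^{-3}\,d[L]$ and $d[L^{-1},\b{r}]=-L^{-3}\b{r}\,d[L]$ without ever unpacking $L=e^{\epsilon/3}$ or the explicit diffusion coefficient $\sqrt{2\lambda\beta^{-1}}$, so the cancellation is visible without specializing to the additive-noise choice of $\gamma(V)$, whereas you compute the concrete value $2\lambda\beta^{-1}/9$ of the offsetting terms — both are valid, and the paper's form makes slightly clearer that the identity does not rely on the particular friction.
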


\begin{proof}
The conclusion can be verified by applying the It\^o's formula. In fact, 
\begin{gather*}
d\b{s} = \b{r}dL^{-1} + L^{-1}d\b{r} + d[L^{-1}, \b{r}]
            =\b{r}\left(-L^{-2}dL + L^{-3}d[L]\right) + L^{-1}\b{r}\frac{dL}{L} -L^{-3}d[L]\b{r} = 0.
\end{gather*}
    where $[X]$ denotes the quadratic variation of $X$, and $[X, Y] = [\frac{1}{2}(X+Y)] - [\frac{1}{2}(X-Y)]$ is the quadratic covariation of $X$ and $Y$. 
 Verification of $d\b{p}^s=0$ is similar.
  \end{proof}
 Hence, the evolution \eqref{eq:barostat} reduces to the dynamics of the barostat with $\b{s}$ and $\b{p}^s$ fixed.  
  This dynamics has no exact solution. However, since it is a single SDE with an additive noise, there are many convenient second order schemes to solve it. In particular, we implement the following prediction-correction scheme, which is a second-order Runge-Kutta scheme.
\begin{gather}\label{eq:eps_pc}
    \begin{small}
    \begin{cases}
        &\epsilon_{n+1}^{\text{EM}} = \epsilon_{n} -\lambda\left(V_n\left.\frac{\partial \cH}{\partial V}\right|_{V_n} - \beta^{-1}\right)\Delta t + \sqrt{2\Delta t \lambda\beta^{-1}} \, z^V_{n},\\
        &\epsilon_{n+1} = \epsilon_{n} - \lambda\left(V_{n}\left.\frac{\partial \cH}{\partial V}\right|_{V_n} + V_{n+1}^{\text{EM}}\left.\frac{\partial \cH}{\partial V}\right|_{V_{n+1}^{\text{EM}}} - 2\beta^{-1}\right)\frac{\Delta t}{2} + \sqrt{2\lambda\Delta t\beta^{-1}} \, z^V_{n}.
    \end{cases}
    \end{small}
\end{gather}
Here, $\left.\frac{\partial \cH}{\partial V}\right|_{V_{n+1}^{\text{EM}}}$ indicates that $\cH$ is viewed as the function of $(\b{s}, \b{p}^s, V)$ and $V$ is updated to $V_{n+1}^{\text{EM}}$. In particular, the following update shall be used in the implementation if $\cH$ is given as a function of $(\b{r}, \b{p}, V)$
\[
\b{r}_{n+1}^{\text{EM}} \leftarrow \frac{L_{n+1}^{\text{EM}}}{L_{n}}\b{r}_n,
\quad \b{p}_{n+1}^{\text{EM}} \leftarrow \frac{L_{n}}{L_{n+1}^{\text{EM}}}\b{p}_n.
\]
After $\epsilon_{n+1}$ is obtained, the stochastic rescaling for $\b{r}$ and $\b{p}$ is then solved as follows: 
\begin{gather}\label{eq:scaling}
    \b{r}_{n+1} \leftarrow \frac{L_{n+1}}{L_{n}}\b{r}_n, \,\b{p}_{n+1} \leftarrow \frac{L_{n}}{L_{n+1}}\b{p}_n.
\end{gather}
We remark that, since the noise in Eq. \eqref{eq:barostat} is additive, Scheme \eqref{eq:eps_pc} achieves second-order weak convergence without the need to evaluate derivatives of the diffusion coefficient, and only one standard normal variable is sampled at each time step.

The other two parts are relatively standard in literature. For example, the thermostat \eqref{eq:thermostat}, which evolves $\b{p}$ over time $\Delta t$ under the Ornstein-Uhlenbeck process $d\b{p}=-\b{\gamma}\b{p}\,dt + \sqrt{2\beta^{-1}\b{m\gamma}}\,dW$, can be explicitly integrated as follows:
\begin{gather*}
    \b{p}(t+\Delta t) = e^{-\Delta t\b{\gamma}}\b{p}(t) + \sqrt{(1-e^{-2\Delta t\b{\gamma}})\beta^{-1}\b{m}}\,\b{z},
\end{gather*}
where $\b{z}$ is a $d$-dimensional vector with each component drawn from a normal distribution with zero mean and unit variance.

For the Hamiltonian dynamics \eqref{eq:hamiltonian}, we again split it into 
 \begin{flalign}\label{eq:displacement}
        &\text{(D)}& 
          \dot{\b{r}}=\b{m}^{-1}\b{p},
        & \hspace{15em} 
\end{flalign}
and
 \begin{flalign}\label{eq:force}
        &\text{(F)}& 
         \dot{\b{p}}=-\nabla_{\b{r}}U.
        & \hspace{15em} 
\end{flalign}
Here the labels "$D$" and "$F$" are short for "displacement" and "force". The benefit of this splitting is that the simple Euler's method is exact for both dynamics. For example, the symplectic Euler's method can be viewed as the solver associated with this splitting.

The law of $\b{X}:=[\b{r}^T, \b{p}^T, \epsilon]^T$, denoted by $\mu_t:= \mu(\b{r}, \b{p}, \epsilon; t)$, satisfies the following Fokker-Planck equation
\begin{gather}\label{eq:FokkerPlank}
    \partial_t\mu_t = \mathcal{L}^{*}\mu_t,
\end{gather}
where $\cL$ represents the generator of the process and $\cL^*$ is the adjoint operator of $\cL$.  The solution to Eq. \eqref{eq:FokkerPlank} can be formally written as $\mu_t = e^{t\mathcal{L}^{\star}}\mu_0$. 
Let the generators for $(D)$, $(F)$, $(T)$ and $(B)$ be $\cL_T$, $\cL_F$, $\cL_T$ and $\cL_B$ respectively. The law of the process under these systems of equations would evolve according to the semigroups $e^{t\cL_i^*}$ where $i=D,F,T,B$.
With these notations, we propose the following splitting method with propagator
\begin{gather}\label{eq:operator_splitting}
    e^{\frac{\Delta t}{2}\cL_{T}^*} e^{\frac{\Delta t}{2}\cL_{F}^*} e^{\frac{\Delta t}{2}\cL_{D}^*} e^{\Delta t\cL_{B}^*}e^{\frac{\Delta t}{2}\cL_{D}^*} e^{\frac{\Delta t}{2}\cL_{F}^*} e^{\frac{\Delta t}{2}\cL_{T}^*} := e^{\Delta t\cL_{s}^*}. 
\end{gather}
The corresponding algorithm is displayed in Algorithm \ref{alg:2ndLangevinSampler}.

\begin{algorithm}[H]
\caption{(Second order Langevin sampler)}\label{alg:2ndLangevinSampler}
\begin{algorithmic}[1]
\State Conduct the thermostat for half a step:
    \begin{gather}
     \widetilde{\b{p}_{n+\frac{1}{2}}} = e^{-\frac{\Delta t}{2}\b{\gamma}}\b{p}_{n} + \sqrt{(1-e^{-\Delta t\b{\gamma}})\beta^{-1}\b{m}}\,\b{z}_n^{(1)}.
    \end{gather}
\State Evolve the Hamiltonian dynamics by updating $\b{p}$ followed by $\b{r}$ for half-a-step: 
    \begin{gather}
       \b{p}_{n+\frac{1}{2}} = \widetilde{\b{p}_{n+\frac{1}{2}}}-\frac{\Delta t}{2}\nabla_{\b{r}}U(\b{r}_n),\quad 
       \b{r}_{n+\frac{1}{2}} = \b{r}_n + \frac{\Delta t}{2}\b{m}^{-1}\b{p}_{n+\frac{1}{2}}.
    \end{gather}
    
\State Conduct the barostat for a full step using \eqref{eq:eps_pc} and \eqref{eq:scaling}: 
    \begin{gather}
        \begin{cases}
            \,\,V_{n+1} = e^{\epsilon_{n+1}}, \quad L_{n+1} = V_{n+1}^{\frac{1}{3}}, \\
            \,\,\b{r}_{n+\frac{1}{2}}^{\star} = \frac{L_{n+1}}{L_{n}}\b{r}_{n+\frac{1}{2}}, \quad \b{p}_{n+\frac{1}{2}}^{\star} = \frac{L_{n}}{L_{n+1}}\b{p}_{n+\frac{1}{2}}.
        \end{cases}
    \end{gather}
\State Evolve the Hamiltonian dynamics by updating $\b{r}$ followed by $\b{p}$ for half-a-step: 
    \begin{gather}
    \b{r}_{n+1} = \b{r}_{n+\frac{1}{2}}^{\star} + \b{m}^{-1}\b{p}_{n+\frac{1}{2}}^{\star}\frac{\Delta t}{2},\quad \widetilde{\b{p}_{n+1}}=\b{p}_{n+\frac{1}{2}}^{\star}-\nabla_{\b{r}}U(\b{r}_{n+1})\frac{\Delta t}{2}.
    \end{gather}
\State Conduct the thermostat for half a step:
    \begin{gather}
       \b{p}_{n+1} = e^{-\frac{\Delta t}{2}\b{\gamma}}\widetilde{\b{p}_{n+1}} + \sqrt{(1-e^{-\Delta t\b{\gamma}})\beta^{-1}\b{m}}\,\b{z}_n^{(2)}.
    \end{gather}
\end{algorithmic}
\end{algorithm}

Since the numerical solutions are all constructed explicitly, we easily conclude the following.
\begin{proposition}
The numerical solution $(\b{r}_{n}, \b{p}_n, \epsilon_n)$ is well-defined for all $n\ge 0$ and thus $V_n=e^{\epsilon_n}$ is positive for all $n\ge 0$.
\end{proposition}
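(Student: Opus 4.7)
The plan is to proceed by induction on $n$, showing that each of the five sub-steps in Algorithm \ref{alg:2ndLangevinSampler} is a closed-form update that maps finite inputs $(\b{r}_n,\b{p}_n,\epsilon_n)\in\R^d\times\R^d\times\R$ to finite outputs $(\b{r}_{n+1},\b{p}_{n+1},\epsilon_{n+1})\in\R^d\times\R^d\times\R$. Once this is established, the positivity of $V_n = e^{\epsilon_n}$ is automatic from the monotonicity and strict positivity of the exponential map, without requiring any further estimate. This is the whole point of working with $\epsilon=\log V$ instead of $V$ directly: the $V\to 0^+$ boundary is pushed to $\epsilon=-\infty$ and cannot be reached in finitely many finite updates.

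For the inductive step I would examine the five operations in order. The two thermostat half-steps apply an affine update $\b{p}\mapsto e^{-\Delta t\b{\gamma}/2}\b{p}+\sqrt{(1-e^{-\Delta t\b{\gamma}})\beta^{-1}\b{m}}\,\b{z}$, which is clearly well-defined for any finite $\b{p}$ and any Gaussian sample $\b{z}$. The two Hamiltonian half-steps consist of the additive updates $\b{p}\mapsto\b{p}-\tfrac{\Delta t}{2}\nabla_{\b{r}}U(\b{r})$ and $\b{r}\mapsto\b{r}+\tfrac{\Delta t}{2}\b{m}^{-1}\b{p}$; since $U\in C^2$ by Assumption \ref{ass:V2partial2H} (and the standing hypothesis in Theorem \ref{thm:invariant_measure}), $\nabla_{\b{r}}U$ is a continuous function of its arguments and hence finite whenever $\b{r}$ and $V$ are finite.

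The only step that warrants a little more care is the barostat update \eqref{eq:eps_pc} combined with the rescaling \eqref{eq:scaling}. The predictor $\epsilon_{n+1}^{\text{EM}}$ is an explicit expression in $\epsilon_n$, $V_n=e^{\epsilon_n}$, and a scalar Gaussian, so it is a finite real number; the corrector evaluates $V\frac{\partial\cH}{\partial V}$ at the predicted state $V_{n+1}^{\text{EM}}=e^{\epsilon_{n+1}^{\text{EM}}}>0$, which again lies in the domain where $\cH$ and its derivative are smooth by the assumption on $U$. The rescaling ratios $L_{n+1}/L_n=e^{(\epsilon_{n+1}-\epsilon_n)/3}$ and its reciprocal are strictly positive and finite, so $\b{r}$ and $\b{p}$ remain finite after multiplication.

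The main (and essentially only) obstacle is a bookkeeping one: to make the induction airtight, I would verify that the gradient $\nabla_{\b{r}}U(\b{r};V)$ and the volume derivative $V\partial_V\cH$ are everywhere defined and finite on $\R^d\times\R^d\times(0,\infty)$, which follows from the twice continuous differentiability of $U$. Given this, each of the seven operators appearing in the composition \eqref{eq:operator_splitting} is a deterministic/stochastic map with explicit formula, and the composition closes on $\R^d\times\R^d\times\R$. Induction on $n$ then gives $(\b{r}_n,\b{p}_n,\epsilon_n)\in\R^d\times\R^d\times\R$ for every $n\ge 0$, and the exponential gives $V_n=e^{\epsilon_n}>0$, completing the proof.
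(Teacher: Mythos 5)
Your proposal is correct and takes essentially the same approach as the paper: the paper dispatches this proposition with the one-line observation that all updates in Algorithm \ref{alg:2ndLangevinSampler} are explicit closed-form maps on $\R^d\times\R^d\times\R$, so that $\epsilon_n$ remains a finite real number and $V_n=e^{\epsilon_n}>0$. You have simply spelled this out as a careful induction over the seven sub-steps, which is the detail the paper implicitly leaves to the reader.
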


Next, we investigate the order of the accuracy for the splitting method in \eqref{eq:operator_splitting}.
In fact, the order of the propagators $e^{\Delta t\cL^*_i}$ is symmetric, and only the barostat part is solved with a second order weak scheme while others are solved exactly. Let $\mathcal{P}_B^*$ be the operator that approximates $e^{\Delta t \cL_B^*}$ in the numerical scheme for the evolution of the law. Then, the evolution of the law after one-step of the numerical method is given by
\begin{gather}\label{eq:numericalpropagator}
\mathcal{S}^*=e^{\frac{\Delta t}{2}\cL_{T}^*} e^{\frac{\Delta t}{2}\cL_{F}^*} e^{\frac{\Delta t}{2}\cL_{D}^*} \mathcal{P}_B^* e^{\frac{\Delta t}{2}\cL_{D}^*} e^{\frac{\Delta t}{2}\cL_{F}^*} e^{\frac{\Delta t}{2}\cL_{T}^*}.
\end{gather}

We then conclude that the method has the second order weak accuracy.
\begin{proposition}\label{the:2ndLangevinSampler}
Suppose that the potential function $U$ is a smooth function.   The splitting method proposed in Algorithm \ref{alg:2ndLangevinSampler} is a second order weak scheme for solving the corresponding  SDE \eqref{eq:rpLV_general} with the particular choice of friction \eqref{eq:friction}.
\end{proposition}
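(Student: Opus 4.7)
The plan is to analyze the local one-step weak error of the numerical propagator $\mathcal{S}^*$ in \eqref{eq:numericalpropagator} via Taylor expansions of the semigroups, to exploit the Strang-type symmetry of the splitting, and then to pass from the local error to the global weak error by a standard telescoping / stability argument.

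First, I would write out the generators explicitly. Since $U\in C^\infty$, the generators
\[
\cL_D=\b{m}^{-1}\b{p}\cdot\nabla_{\b{r}},\quad \cL_F=-\nabla_{\b{r}}U\cdot\nabla_{\b{p}},\quad \cL_T=-\b{\gamma}\b{p}\cdot\nabla_{\b{p}}+\beta^{-1}(\b{m}\b{\gamma}):\nabla_{\b{p}}^2,
\]
and $\cL_B=-\lambda(V\partial_V\cH-\beta^{-1})\partial_\epsilon+\lambda\beta^{-1}\partial_\epsilon^2+\b{r}\partial_\epsilon\cdot\nabla_{\b{r}}-\b{p}\partial_\epsilon\cdot\nabla_{\b{p}}$ (after using Lemma \ref{lem:ds=0} to see that the rescaling and $\epsilon$-equation together are generated by this second-order operator with additive noise in $\epsilon$ and trivial dynamics in $(\b{s},\b{p}^{\b{s}})$) are smooth on a suitable class of test functions $\varphi$. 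The exact flow of the SDE \eqref{eq:rpLV_general} with friction \eqref{eq:friction} then evolves according to $e^{\Delta t\cL^*}$ with $\cL=\cL_D+\cL_F+\cL_T+\cL_B$.

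Next I would establish that each factor in \eqref{eq:numericalpropagator} is exact except $\mathcal{P}_B^*$. Steps (D), (F) and (T) are integrated exactly by the explicit update formulas in Algorithm \ref{alg:2ndLangevinSampler}, so those semigroups are $e^{(\Delta t/2)\cL_i^*}$ exactly. For the barostat (B), Lemma \ref{lem:ds=0} tells us that $(\b{s},\b{p}^{\b{s}})$ is frozen during this step and $\epsilon$ evolves under a scalar SDE with \emph{additive} noise; hence I would invoke the classical result that an explicit two-stage Runge--Kutta (Heun) predictor--corrector scheme applied to an additive-noise SDE with smooth drift is a weak second-order scheme (see e.g. Kloeden--Platen), giving
\[
\bigl|\langle(\mathcal{P}_B^*-e^{\Delta t\cL_B^*})\varphi,\rho_0\rangle\bigr|\le C_\varphi\Delta t^3
\]
for smooth $\varphi$ and well-behaved $\rho_0$. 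Equivalently, as operators acting on smooth test functions, $\mathcal{P}_B=I+\Delta t\cL_B+\tfrac{1}{2}\Delta t^2\cL_B^2+O(\Delta t^3)$, matching the expansion of $e^{\Delta t\cL_B}$ through order two.

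Then I would carry out the Strang--splitting expansion. Writing
\[
\mathcal{S}=e^{\frac{\Delta t}{2}\cL_T}e^{\frac{\Delta t}{2}\cL_F}e^{\frac{\Delta t}{2}\cL_D}\mathcal{P}_B e^{\frac{\Delta t}{2}\cL_D}e^{\frac{\Delta t}{2}\cL_F}e^{\frac{\Delta t}{2}\cL_T}
\]
acting on smooth test functions, a term-by-term Taylor expansion gives
\[
\mathcal{S}=I+\Delta t\,\cL+\tfrac{1}{2}\Delta t^2\,\cL^2+R,\qquad R=O(\Delta t^3),
\]
with the $O(\Delta t^3)$ remainder involving only bounded combinations of up to third-order compositions of the $\cL_i$'s and the truncation error of $\mathcal{P}_B$. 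The symmetry of the palindromic arrangement $(T,F,D,B,D,F,T)$ kills all the would-be first-order commutator terms (e.g., $[\cL_F,\cL_D]$, $[\cL_T,\cL_F]$, etc.), which is the usual reason Strang splitting is second order; this matches exactly the expansion of $e^{\Delta t\cL}$ through order two. Hence the local weak error is
\[
\bigl|\langle(\mathcal{S}^*-e^{\Delta t\cL^*})\varphi,\rho_0\rangle\bigr|\le C_\varphi\Delta t^3.
\]

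Finally, I would convert the local error to a global weak error of order two by the standard telescoping identity
\[
(\mathcal{S}^*)^n-e^{n\Delta t\cL^*}=\sum_{k=0}^{n-1}(\mathcal{S}^*)^{n-1-k}\bigl(\mathcal{S}^*-e^{\Delta t\cL^*}\bigr)e^{k\Delta t\cL^*},
\]
applied to $\varphi$ and tested against $\rho_0$, which yields $\sup_{n\Delta t\le T}|\langle\varphi,((\mathcal{S}^*)^n-e^{n\Delta t\cL^*})\rho_0\rangle|\le C_\varphi T\Delta t^2$. The hard part, as always in this kind of analysis, is the \textbf{non-globally-Lipschitz coefficients}: the drift $-\lambda V^2\partial_V\cH$ and the couplings of $V$ with $\b{r},\b{p}$ grow with the state, so the $O(\Delta t^3)$ remainder $R\varphi$ is only a polynomial-growth function, and transporting it through $(\mathcal{S}^*)^{n-1-k}$ and $e^{k\Delta t\cL^*}$ requires uniform-in-$n$ polynomial moment bounds on both the true process (provided by the enthalpy Lyapunov estimate in Step 1 of the proof of Theorem \ref{thm:invariant_measure}, combined with Assumption \ref{ass:V2partial2H} and the additive noise in $\epsilon$) and on the numerical iterates (which follows from the same Lyapunov function, since each of (D), (F), (T) is integrated exactly and the barostat Heun step preserves the same moment bounds up to an $O(\Delta t)$ correction). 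With these moment bounds in place, the telescoping argument goes through and yields weak order two.
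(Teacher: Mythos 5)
Your core argument matches the paper's proof. Both use the palindromic (Strang) arrangement of the splitting together with the BCH expansion to show $e^{\Delta t \cL_s} = e^{\Delta t \cL} + \mathcal{O}(\Delta t^3)$, note that the (D), (F), (T) sub-flows are integrated exactly, invoke Lemma \ref{lem:ds=0} to reduce (B) to a scalar SDE in $\epsilon$ with additive noise, and observe that the prediction–correction step \eqref{eq:eps_pc} is a weak second-order integrator for that scalar SDE, so $\mathcal{P}_B = e^{\Delta t\cL_B} + \mathcal{O}(\Delta t^3)$; the triangle inequality then gives the $\mathcal{O}(\Delta t^3)$ one-step weak error, which is exactly the ``weak order $2$'' claim in the sense of the paper's Definition~3.1 (the one-step criterion). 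One minor slip: your explicit formula for $\cL_B$ in $(\b{r},\b{p},\epsilon)$ coordinates is not quite the correct generator --- the terms $\b{r}\,\partial_\epsilon\!\cdot\!\nabla_{\b{r}} - \b{p}\,\partial_\epsilon\!\cdot\!\nabla_{\b{p}}$ do not capture the It\^o corrections (the $d[\epsilon]$ contributions to $dL/L$ and $dL^{-1}/L^{-1}$) nor the $\partial_\epsilon\nabla_{\b{r}}$ and $\partial_\epsilon\nabla_{\b{p}}$ quadratic covariations --- but this is harmless since you immediately pass to $(\b{s},\b{p}^{\b{s}},\epsilon)$ via Lemma \ref{lem:ds=0}, where $\cL_B$ is a clean 1D operator, and that is all the argument uses.

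Where you and the paper diverge is in your final paragraph on global convergence. The paper deliberately stops after the one-step estimate and states explicitly that converting to a uniform-in-$n$ error on $[0,T]$, and further to an error estimate for the invariant measure, ``needs significant extra effort'' (moment bounds on both flows, ergodicity/mixing of the numerical chain), and is ``left for future study.'' Your telescoping step and the claim that the numerical iterates inherit the Lyapunov moment bounds ``up to an $O(\Delta t)$ correction'' is precisely the piece the authors flag as nontrivial; as written it is an assertion, not a proof. In particular, the drift $-\lambda V^2\partial_V\cH$ is only controlled via Assumption \ref{ass:V2partial2H} through the It\^o calculation on $\cH$ for the \emph{continuous} dynamics (Step 1 of Theorem \ref{thm:invariant_measure}); establishing the analogous moment control for the composed discrete map, including the predictor–corrector barostat step, is a separate discrete Lyapunov argument that you would need to carry out. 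Since the proposition as stated only asserts second-order weak accuracy in the one-step sense, your extra paragraph isn't required; if you want to keep it, you should either downgrade it to a remark on what remains to be shown, or actually supply the discrete moment bounds.
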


We first note a well-known fact in for splitting, stating that a symmetry propagator splitting has second-order accuracy. 
 \begin{gather}\label{eq:BCHlemma}
        e^{\frac{\Delta t}{2}\cL_1} \cdots e^{\frac{\Delta t}{2}\cL_{n-1}} e^{\Delta t\cL_n} e^{\frac{\Delta t}{2}\cL_{n-1}} \cdots e^{\frac{\Delta t}{2}\cL_1} = e^{\Delta t\sum_{i=1}^n\cL_i} + \mathcal{O}(\Delta t^3),
    \end{gather}
    where we assume that $e^{s\cL_i}$ is a strongly continuous semigroup.
 In fact,  for the case $n=2$, applying the BCH formula yields
    \begin{gather}
    \begin{split}
        & e^{\frac{\Delta t}{2}\cL_1}e^{\Delta t\cL_2}e^{\frac{\Delta t}{2}\cL_1}\\
        = & \exp\left\{\frac{\Delta t}{2}\cL_1 + \Delta t\cL_2 + \frac{1}{2} [\frac{\Delta t}{2}\cL_1, \Delta t\cL_2] + \mathcal{O}(\Delta t^3)\right\}e^{\frac{\Delta t}{2}\cL_1}\\
        = & \exp\left\{\frac{\Delta t}{2}\cL_1 + \Delta t\cL_2 + \frac{\Delta t^2}{4} [\cL_1, \cL_2] +\frac{\Delta t}{2}\cL_1 + \frac{\Delta t^2}{2}[\frac{1}{2}\cL_1 + \cL_2, \frac{1}{2}\cL_1] + \mathcal{O}(\Delta t^3)\right\}\\
        = & e^{\Delta t(\cL_1+\cL_2)} + \mathcal{O}(\Delta t^3),
    \end{split}
    \end{gather}
    where $[X,Y] = XY-YX$ is the Lie bracket. The second-order terms of $\Delta t$ offset due to the symmetry. 
    For the case $n>2$, Eq. \eqref{eq:BCHlemma} can be established by recursion on $n$.   
 With this fact, we actually have the following claim.
\begin{lemma}
Consider the generators $\cL_i$ in \eqref{eq:operator_splitting}.  For distribution $\rho_0$ that has all orders of moments of $(\b{r},\b{p}, V)$, and any test function $\varphi\in C_p^{\infty}$, for $\Delta t$ small enough, the following hold
    \begin{gather}
|\langle e^{\Delta t \cL_s}\varphi-e^{\Delta t\cL}\varphi, \rho_0\rangle|\le C_{\varphi}\Delta t^3.
    \end{gather}
where the constant $C_{\varphi}$ depends on $\varphi$ and $\rho_0$ but independent of $\Delta t$.
\end{lemma}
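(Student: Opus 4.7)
The plan is to promote the operator identity \eqref{eq:BCHlemma} to a quantitative weak-error bound in the Talay--Tubaro framework, using the Strang symmetry to kill both the first and the second order terms in the discrepancy. Concretely, I would Taylor-expand both $e^{\Delta t\cL}\varphi$ and the composition $e^{\Delta t\cL_s}\varphi$ in powers of $\Delta t$, match coefficients up to order $\Delta t^2$ by the BCH argument already carried out above the lemma, and then control the order-$\Delta t^3$ remainders pointwise as polynomial-growth functions of $(\b{r},\b{p},V)$ which are integrable against $\rho_0$.

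First I would verify that each sub-generator $\cL_i$ for $i\in\{T,F,D,B\}$ is a differential operator of order at most two with smooth coefficients of at most polynomial growth in $(\b{r},\b{p},V)$: smoothness of $U$ gives this for (H) and (T), while the choice $\gamma(V)=1/(\lambda V^2)$ together with working in the variable $\epsilon=\log V$ makes the barostat (B) a differential operator with smooth drift and additive noise in $\epsilon$. Consequently $\cL_i\colon C_p^{\infty}\to C_p^{\infty}$, and every triple composition $\cL_{i_1}\cL_{i_2}\cL_{i_3}\varphi$ still lies in $C_p^{\infty}$. Next I would expand each factor by iterating Kolmogorov's backward equation,
\[
e^{(\Delta t/2)\cL_i}\varphi = \varphi+\tfrac{\Delta t}{2}\cL_i\varphi+\tfrac{\Delta t^2}{8}\cL_i^2\varphi+\tfrac{\Delta t^3}{48}\cL_i^3\varphi+R_i(\Delta t),
\]
with integral remainder $R_i$ bounded pointwise by a polynomial in $(\b{r},\b{p},V)$ times $\Delta t^4$. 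Composing the seven such expansions and collecting powers of $\Delta t$ gives $\Delta t\cL\varphi$ at first order with $\cL=\cL_T+\cL_F+\cL_D+\cL_B$, and at second order the cancellation of all Lie brackets $[\cL_i,\cL_j]$ forced by the symmetric arrangement, exactly as in the derivation of \eqref{eq:BCHlemma}, leaves $\tfrac{\Delta t^2}{2}\cL^2\varphi$, matching $e^{\Delta t\cL}\varphi$. The difference $(e^{\Delta t\cL_s}-e^{\Delta t\cL})\varphi$ is therefore $\Delta t^3\,Q(\b{r},\b{p},V;\varphi)+O(\Delta t^4)$ with $Q$ a finite linear combination of triple compositions of the $\cL_i$ applied to $\varphi$. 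Pairing with $\rho_0$ and using that $\rho_0$ has moments of every order delivers $|\langle(e^{\Delta t\cL_s}-e^{\Delta t\cL})\varphi,\rho_0\rangle|\le C_\varphi\Delta t^3$.

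The main obstacle is the uniform-in-$\Delta t$ moment propagation that makes the pointwise remainder estimates integrable after composition. Unlike (D), (F) and (T), which admit explicit flows so that moment bounds transfer for free, the barostat step couples through $V\,\partial\cH/\partial V$ and requires a Lyapunov-type argument on $[0,\Delta t_0]$ showing that if the input law has a moment of a given order then so does the output after one application of $e^{\Delta t\cL_B}$; the ingredients are already present in the proof of Theorem \ref{thm:invariant_measure}, notably Assumption \ref{ass:V2partial2H} and the coercive term $-\tfrac{1}{3}\lambda V^2(\partial\cH/\partial V)^2$ appearing in \eqref{eq:BDGbound}, so this is a matter of careful bookkeeping rather than a genuinely new estimate. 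Verifying that this bookkeeping survives through each of the seven composed factors, and absorbing the polynomial weight in $Q$ using the all-orders-moment hypothesis on $\rho_0$, is the most delicate part of the argument.
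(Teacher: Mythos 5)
Your proposal follows essentially the same route as the paper's (quite terse) proof: both rest on the BCH expansion for a symmetric/Strang composition, which cancels the $\mathcal{O}(\Delta t^2)$ commutator terms, together with the observation that the generators $\cL_T,\cL_F,\cL_D,\cL_B$ have smooth, polynomially-growing coefficients so that any finite composition applied to $\varphi\in C_p^{\infty}$ stays in $C_p^{\infty}$ and is therefore integrable against a $\rho_0$ with all moments finite. Where you go further is in isolating the one technical point the paper leaves implicit: the integral (Lagrange) remainders $R_i$ involve the propagators $e^{s\cL_i}$ themselves, so one needs $e^{s\cL_i}$ to map $C_p^{\infty}$ to $C_p^{\infty}$ uniformly for $s\in[0,\Delta t_0]$. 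For the (D), (F), (T) pieces this is immediate from the explicit flows, and for (B) you correctly note that the change to $\epsilon=\log V$ yields an additive-noise SDE with polynomially-growing, locally Lipschitz drift (see \eqref{eq:VpartialH}), so a short Lyapunov argument gives uniform moment bounds on $[0,\Delta t_0]$; this is the ``careful bookkeeping'' you flag, and it is indeed all that is missing from the paper's one-line justification. Making that point explicit is a genuine improvement in rigor, not a divergence in method.
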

\begin{proof}
First note that the coefficients of the generators are smooth (see \eqref{eq:VpartialH} for the formula of $V\partial\cH/\partial V$).
The proof is actually very straightforward by the BCH formula, by noting that all the operators (including those in the remainder $O(\Delta t^3)$) acting on $\varphi\in C_p^{\infty}$ gives a function that is integrable against $\rho_0$.
\end{proof}

With the observation, Proposition \ref{the:2ndLangevinSampler} then follows directly since the only difference is that
$e^{\Delta t\cL_{\text{B}}}$ is approximated by $\mathcal{P}_B$ with one-step third order weak error.
Then, fixing any $\rho_0$ that has finite moments of any order, one then has (note that $\mathcal{S}$ is the dual of \eqref{eq:numericalpropagator})
\begin{gather*}
|\langle \mathcal{S}\varphi-e^{\Delta t \cL_s}\varphi, \rho_0\rangle|\le C_{\varphi}\Delta t^3.
\end{gather*}

To prove that the method converges on finite interval $[0, T]$ with weak order $2$, one needs to control the tails of the numerical density or the density for the time continuous equations. 
Moreover, what we care about is the error of the invariant measure. This needs the ergodicity of the Markov chain generated by the numerical propagator.
Together with some mixing properties of the numerical propagator, the weak convergence order of the invariant measure can be shown to be 2 as well. We find the rigorous study of these theoretic properties needs significant extra effort and we would like to leave them for future study.

\section{Numerical Examples}\label{sec:experiments}

In this section, a free gas model, an artificial interacting particle system and the Lennard-Jones fluid are used as test cases to validate the effectiveness and applicability of our second-order Langevin sampler across different scenarios. First, the free gas model facilitates straightforward testing under ideal conditions, serving as a baseline for evaluating algorithm performance. Second, an interacting particle system with artificial interaction further demonstrates the second-order convergence of our algorithm. Finally, the Lennard-Jones fluid, a standard model in molecular simulations characterized by its interacting potential, tests the robustness of our algorithm in more complex, realistic scenarios and higher dimensional spaces. 

Taking all the periodic boxes into account, the potential $U$ is given by \eqref{eq:potentialU}. 
In this setting, it is not convenient to use the viral formula \eqref{eq:inspressure1} to compute the pressure. Instead, we compute the partial derivative of the energy with respect to $V$ by fixing $\b{s}$ and $\b{p}^s$ directly.
\begin{gather}
    \begin{split}
        P = & -\frac{\partial}{\partial V}\left(U(V^{1/3}\b{s}; V) - \frac{1}{2}V^{-2/3}\b{p}^{\b{s},T}\b{m}^{-1}\b{p}^{\b{s}}\right)\\
           =& \frac{1}{3V}\b{p}^T\b{m}^{-1}\b{p} - \frac{1}{3V}\sum_{1\leq i<j\leq N} \sum_{\b{n}} \left(\b{r}_{ij} + V^{1/3}\b{n}\right)^T\nabla\Phi\left(\b{r}_{ij} + V^{1/3}\b{n}\right).
    \end{split}
\end{gather}

\subsection{Free gas}

In this example, we consider the simplest model: the free gas with $\Phi=0$. In this setting, the $N$ particles move independently within the simulation box, with neither interactions between particles nor external forces acting upon them. The dynamics is then governed by the thermostat and the barostat.  The marginal distribution of the stationary distribution with respect to $V$ is given by
\begin{gather}\label{eq:rhoV_toy}
    \rho_V(V) = \frac{\left(\beta P_0\right)^{N+1}}{N!}V^Ne^{-\beta P_0 V}.
\end{gather}

\begin{figure}[!htbp]
    \centering
    \subfigure{\includegraphics[width=0.48\textwidth]{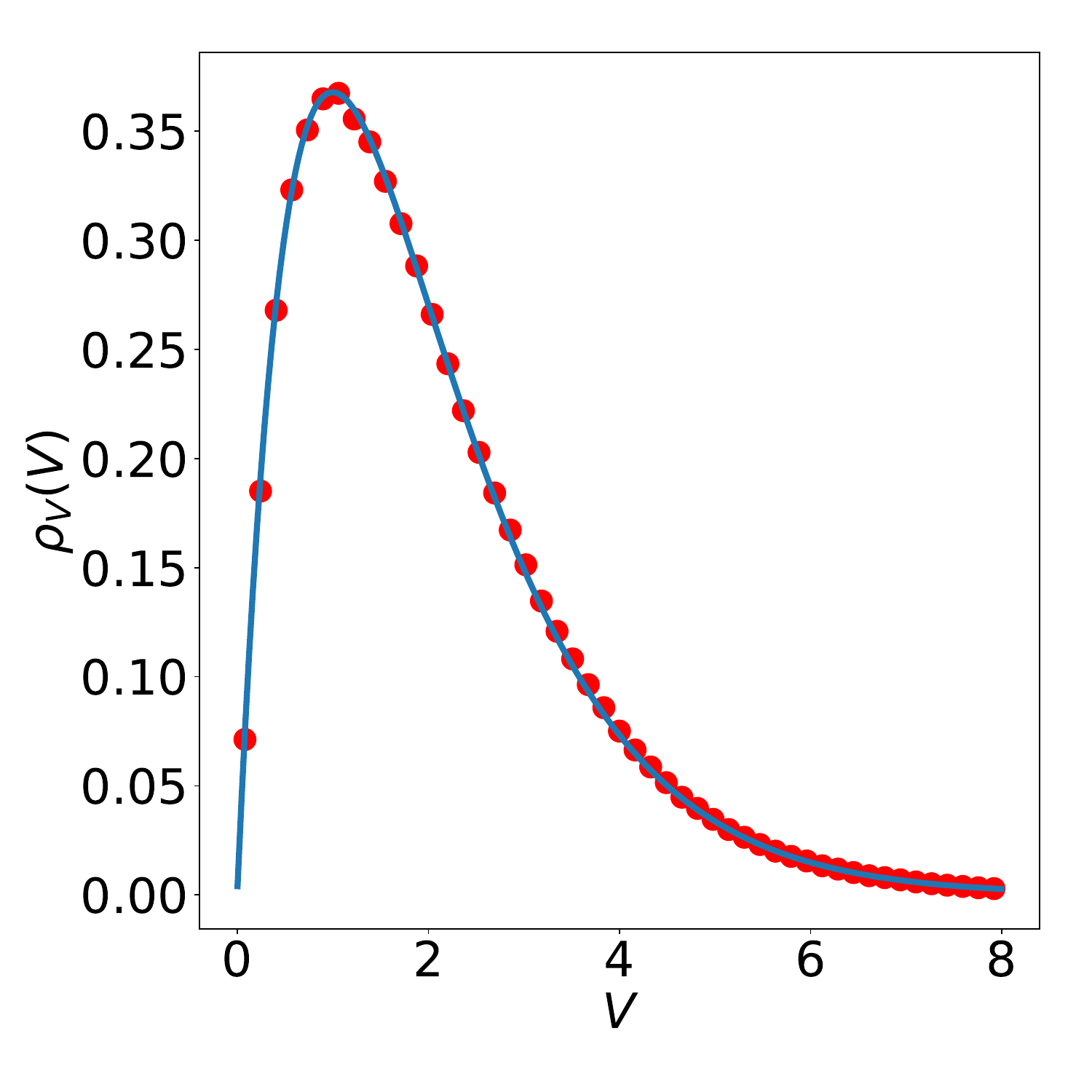}}
    \hfill
    \subfigure{\includegraphics[width=0.48\textwidth]{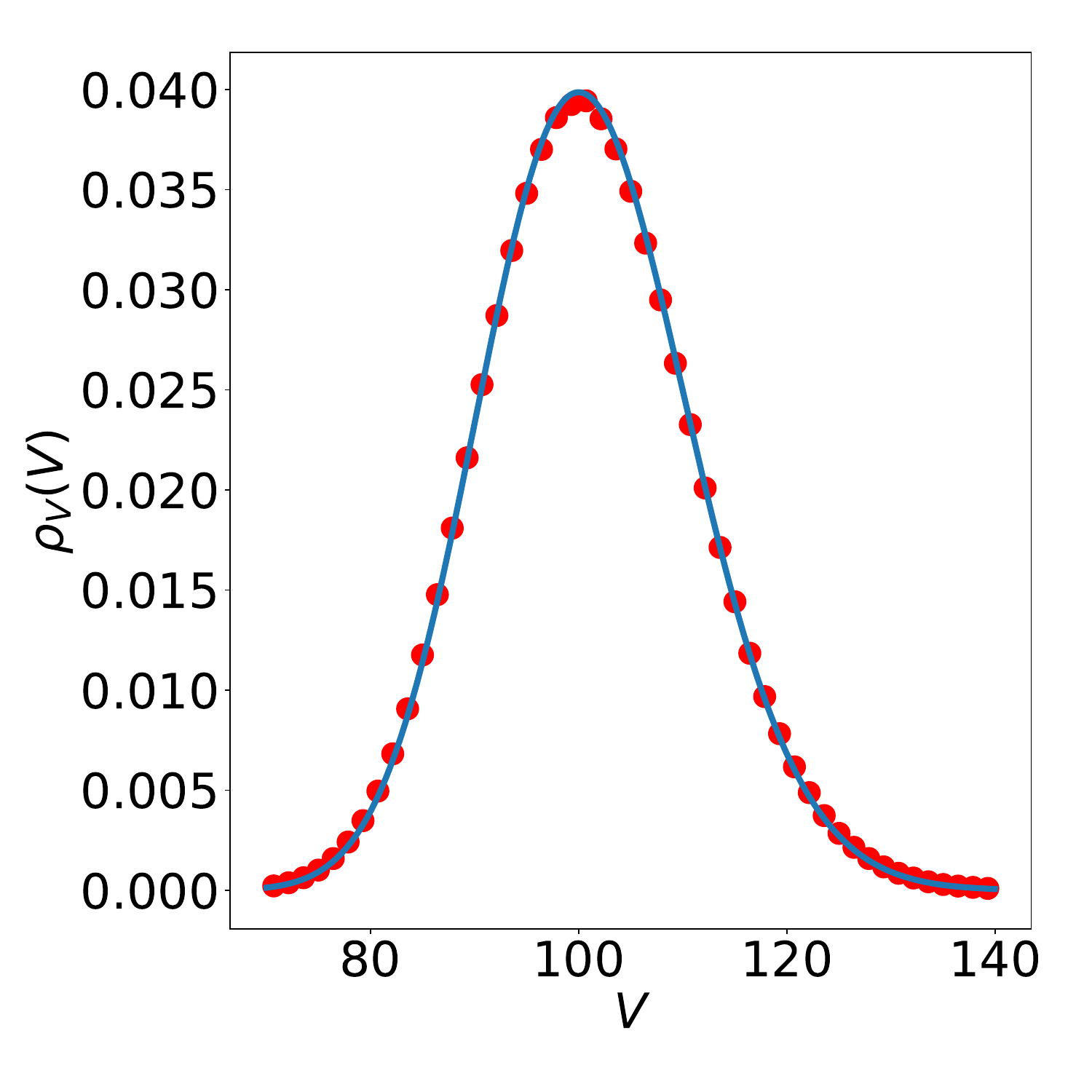}}
    \caption{Marginal distribution of the stationary distribution concerning $V$. Left panel: the numerical results of the experiment with $N=1$ particle. Right panel: the numerical results of the experiment with $N=100$ particles. In both panels. the red circles are the empirical density obtained by $10^7$ iterations with a time step of $\Delta t=10^{-3}$ while the blue curve is the exact density \eqref{eq:rhoV_toy}. The empirical density is perfectly overlapping with the exact one. }
    \label{fig:rhoV_toy}
\end{figure}

In the simulations, we set $P_0=1.0$ and $\beta=1.0$. We perform $10^7$ iterations with a time step of $\Delta t=10^{-3}$
to sample from the target ensemble, using particle numbers $N=1$ and $N=100$, respectively. Samples are collected at every iteration, with no samples discarded at the beginning of the iteration. 
Figure \ref{fig:rhoV_toy} illustrates the numerical solution of the marginal distribution with respect to $V$, demonstrating that the empirical distribution (red circles) is indistinguishable from the exact distribution (blue curve).

\begin{figure}[!htbp]
    \centering
    \subfigure{\includegraphics[width=0.48\textwidth]{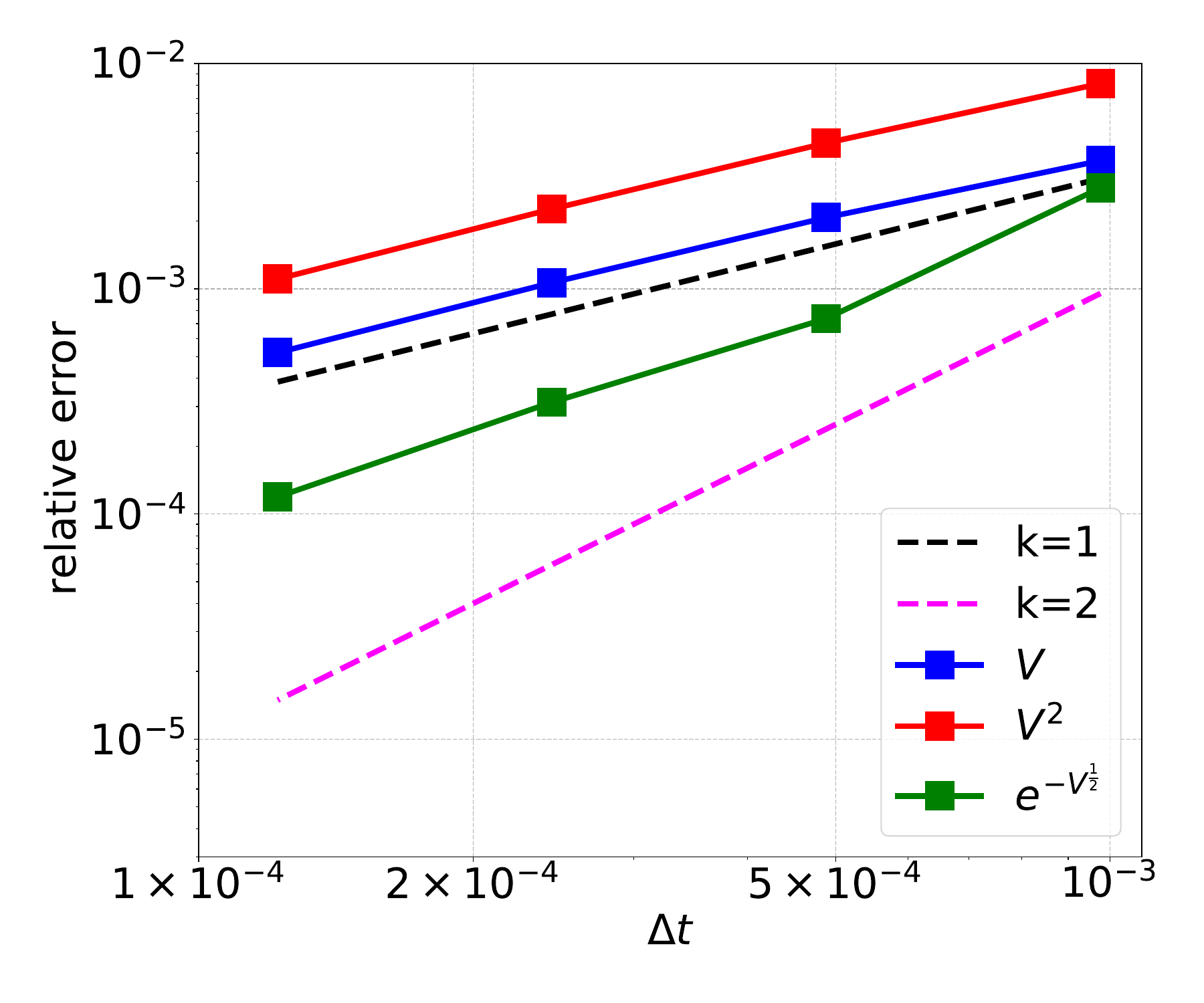}}
    \hfill
    \subfigure{\includegraphics[width=0.48\textwidth]{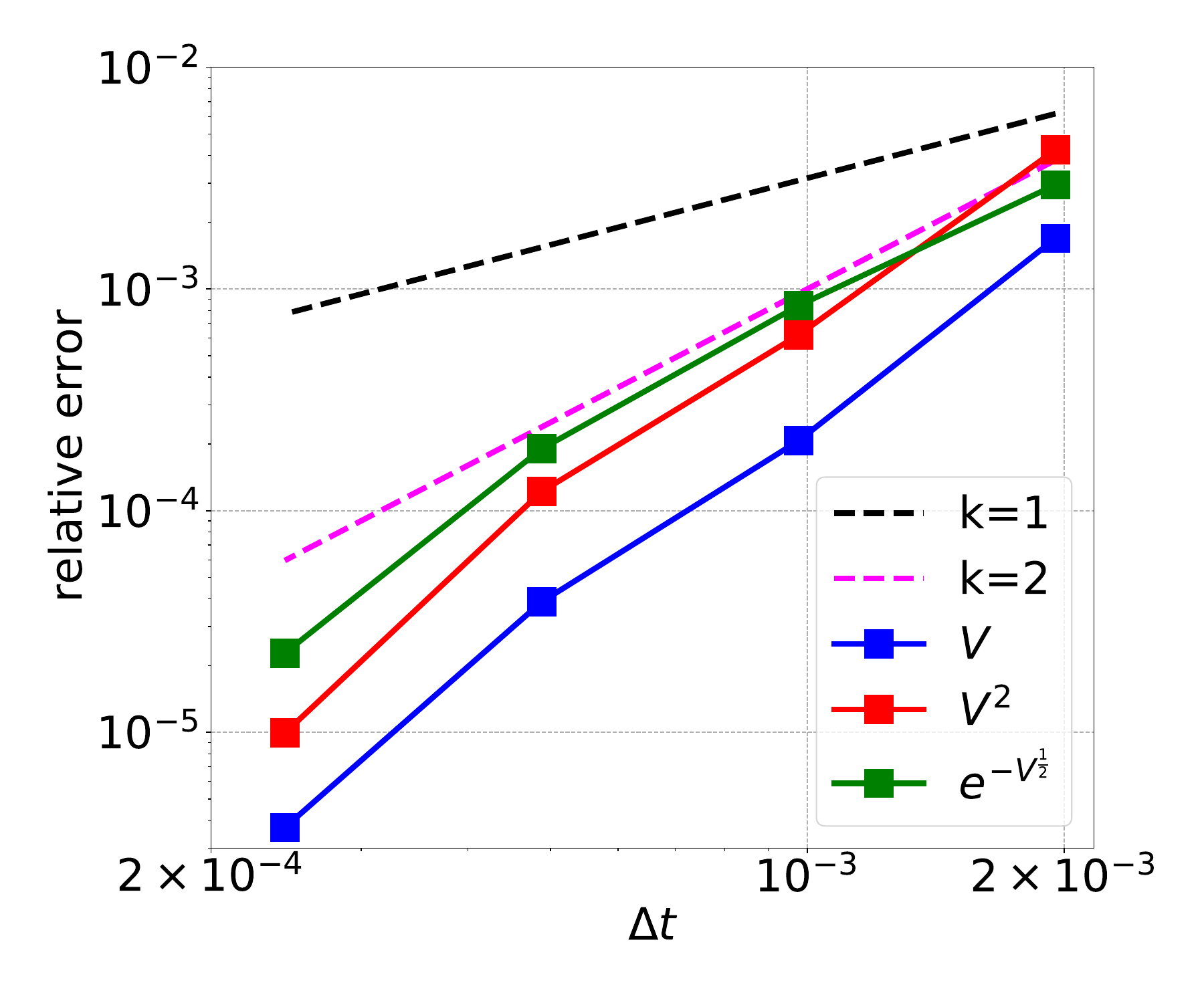}}
    \caption{Relative error verse time step with the left panel for Scheme \eqref{eq:EM_Vrp} and the right panel for Algorithm \ref{alg:2ndLangevinSampler}. The numerical results obtained by taking the time step $\Delta t_{\text{ref}}=2^{-14}$ serve as the reference solution. The solid curves of various colors present the relative errors in different test functions, as shown in the legend. The magenta dotted curve indicates the second-order convergence rate. }
    \label{fig:error_free-gas}
\end{figure}

In the following, we numerically verify that our proposed scheme has second-order accuracy. We set the number of particles to $N=400$ and fix the total evolution time to $T_{\text{E}}=1.0$. The numerical result obtained with a time step of $\Delta t_{\text{ref}}=2^{-14}$ is used as the reference solution. Letting $\varphi$ denote the test function used to measure the weak order of convergence, the relative error is calculated as:
\begin{gather}\label{eq:relError}
    \left|\frac{1}{K}\sum_{k=1}^{K} \varphi^{(\ell)}_{T_{\text{E}},k} - \frac{1}{K}\sum_{k=1}^{K} \varphi^{(0)}_{T_{\text{E}},k}\middle|\right/\frac{1}{K}\sum_{k=1}^{K} \varphi^{(0)}_{T_{\text{E}},k}
\end{gather}
where $\varphi^{(\ell)}_{T_{\text{E}},k}$ denotes the $k$-th sample of the numerical approximation of $\varphi$ at time $T_{\text{E}}$, computed by taking the time step $\Delta t=2^{-\ell}$ with $\ell = 9,\ldots, 13$. There are $K=10^6$ samples in total, and the test functions chosen are $\varphi=V, V^2$ and $e^{-V^{1/2}}$. Figure \ref{fig:error_free-gas} presents the relative errors in these test functions, with the left panel corresponding to Scheme \eqref{eq:EM_Vrp} and the right panel corresponding to Algorithm \ref{alg:2ndLangevinSampler}. One can see from Figure \ref{fig:error_free-gas} that Scheme \eqref{eq:EM_Vrp} converges in first order while Algorithm \ref{alg:2ndLangevinSampler} converges second order.

\subsection{An artificial interacting particle system}
In the second toy example, we consider interaction between particles in the form of:
\begin{gather}
    \Phi(\b{r}_{ij}) = \frac{1}{1+\left|\b{r}_{ij}\right|^4},
\end{gather}
where $\b{r}_{ij} = \b{r}_i-\b{r}_j$ is the three-dimensional displacement from locations $\b{r}_j$ to $\b{r}_i$. Taking the images in the periodic boxes into account, each particle $i$ interacts with not only all the other particles $j\neq i$ in the current box but also their periodic images. Therefore, the potential of the whole system is
\begin{gather}
    U(\b{r}; V) = \sum_{1\leq i<j\leq N}w_iw_j \sum_{\b{n}} \Phi(\b{r}_{ij}+V^{1/3}\b{n}),
\end{gather}
where $w_i$ denotes the weight of particle $i$, and $\b{n}\in\mathbb{Z}^3$ ranges over the three-dimensional integer column vectors. 
\begin{remark}
We remark that for the Coulomb potential, $\sum_{\b{n}} \Phi(\b{r}_{ij}+V^{1/3}\b{n})$ is divergent and one must use the charge neutral conditions to rearrange the summation to compute the total energy.
\end{remark}

We consider indistinguishable particles such that $w_i = w$ for $i = 1, \ldots, N$. Noting that the interaction potential gradually vanishes for as $|\b{r}|$ increases, we only consider the images in the surrounding boxes sharing faces with the original boxes. 
Therefore, the force is approximated by
\begin{gather}
    \b{F}_i \approx w^2\sum_{j\neq i} \sum_{\b{n}\in\mathcal{N}} \frac{4\left|\b{r}_{ij}+V^{1/3}\b{n}\right|^2\left(\b{r}_{ij}+V^{1/3}\b{n}\right)}{\left(1+\left|\b{r}_{ij}+V^{1/3}\b{n}\right|^4\right)^2}, 
\end{gather}
where $\mathcal{N} = \{\b{0}\}\cup \{\b{n}\in\mathbb{Z}^3: \left\|\b{n}\right\| = 1\}$, and the pressure is approximated by
\begin{gather}
    P \approx \frac{\b{p}^T\b{m}^{-1}\b{p}}{3V} + \frac{w^2}{3V}\sum_{1\leq i<j\leq N} \sum_{\b{n}\in\mathcal{N}}\frac{4\left|\b{r}_{ij}+V^{1/3}\b{n}\right|^4}{\left(1+\left|\b{r}_{ij}+V^{1/3}\b{n}\right|^4\right)^2}.
\end{gather}

Setting the number of particles to $N=10$, the pressure to $P_0=1.0$ and the temperature to $k_BT_0=2.0$, we are going to verify the second-order weak convergence rate again. Similar to the above example, we fix the total evolution time to $T_{\text{E}}=1.0$ and use the numerical result obtained with a time step of $\Delta t_{\text{ref}}=2^{-12}$ as the reference solution. Here, we use $V$, $P$, $\rho=N/V$ and $\sqrt{V}e^{-\sqrt{V}}$ as the test functions and we collect $K=10^6$ samples. The relative errors (defined in Eq. \eqref{eq:relError}) in the numerical solutions computed by taking the time step of $\Delta t = 2^{-\ell}, \ell = 7, \ldots, 11$ are presented in Figure \ref{fig:error_toy2}, which again exhibits the second-order convergence rate. For comparison, we also give the relative errors of Scheme \eqref{eq:EM_Vrp}, where the reference solution is given by the numerical solution of taking the time step $\Delta t_{\text{ref}} = 2^{-14}$, since Scheme \eqref{eq:EM_Vrp} cannot tolerate relatively large time step such as $2^{-8}$. 
\begin{figure}[!htbp]
    \centering
    \subfigure{\includegraphics[width=0.48\textwidth]{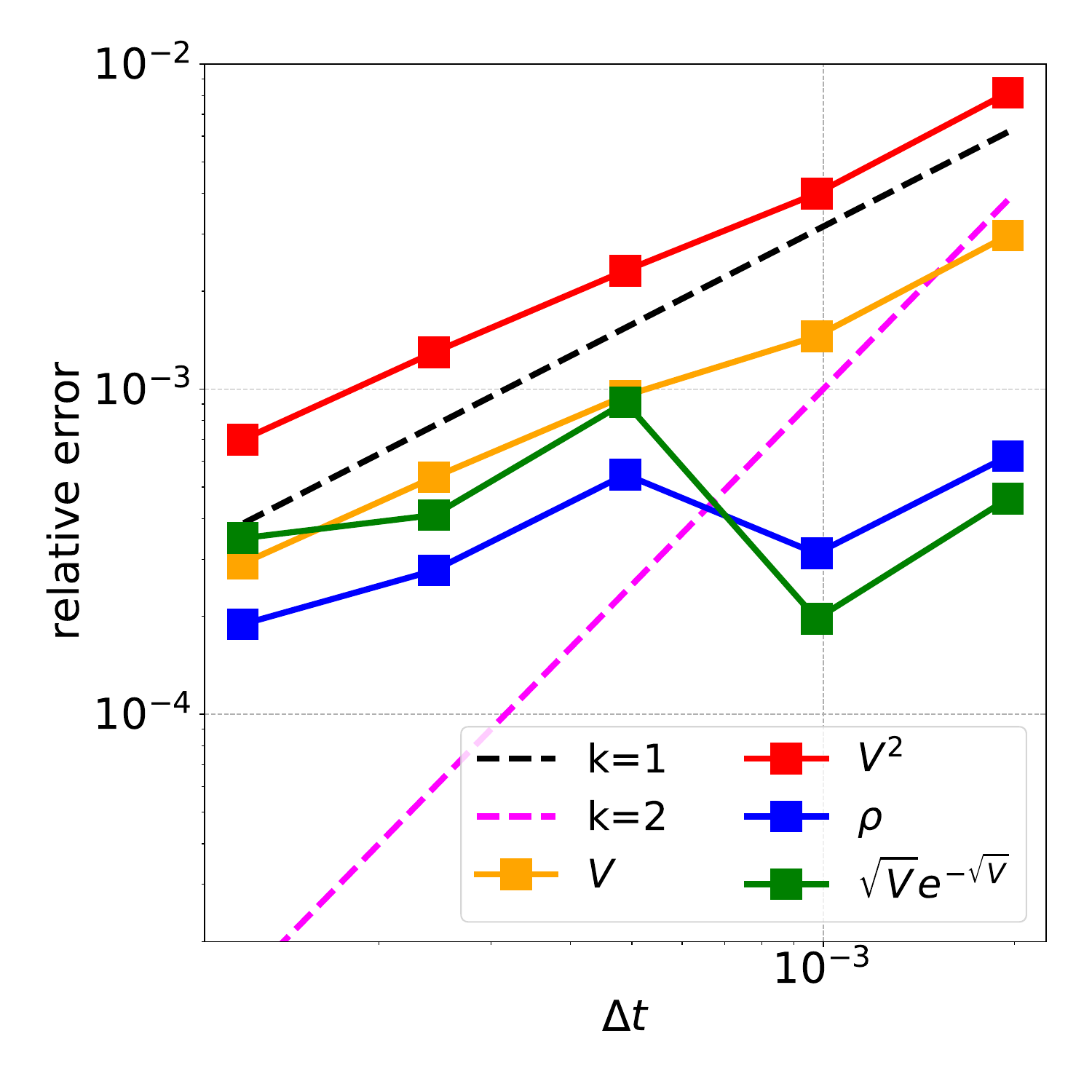}}
    \hfill
    \subfigure{\includegraphics[width=0.48\textwidth]{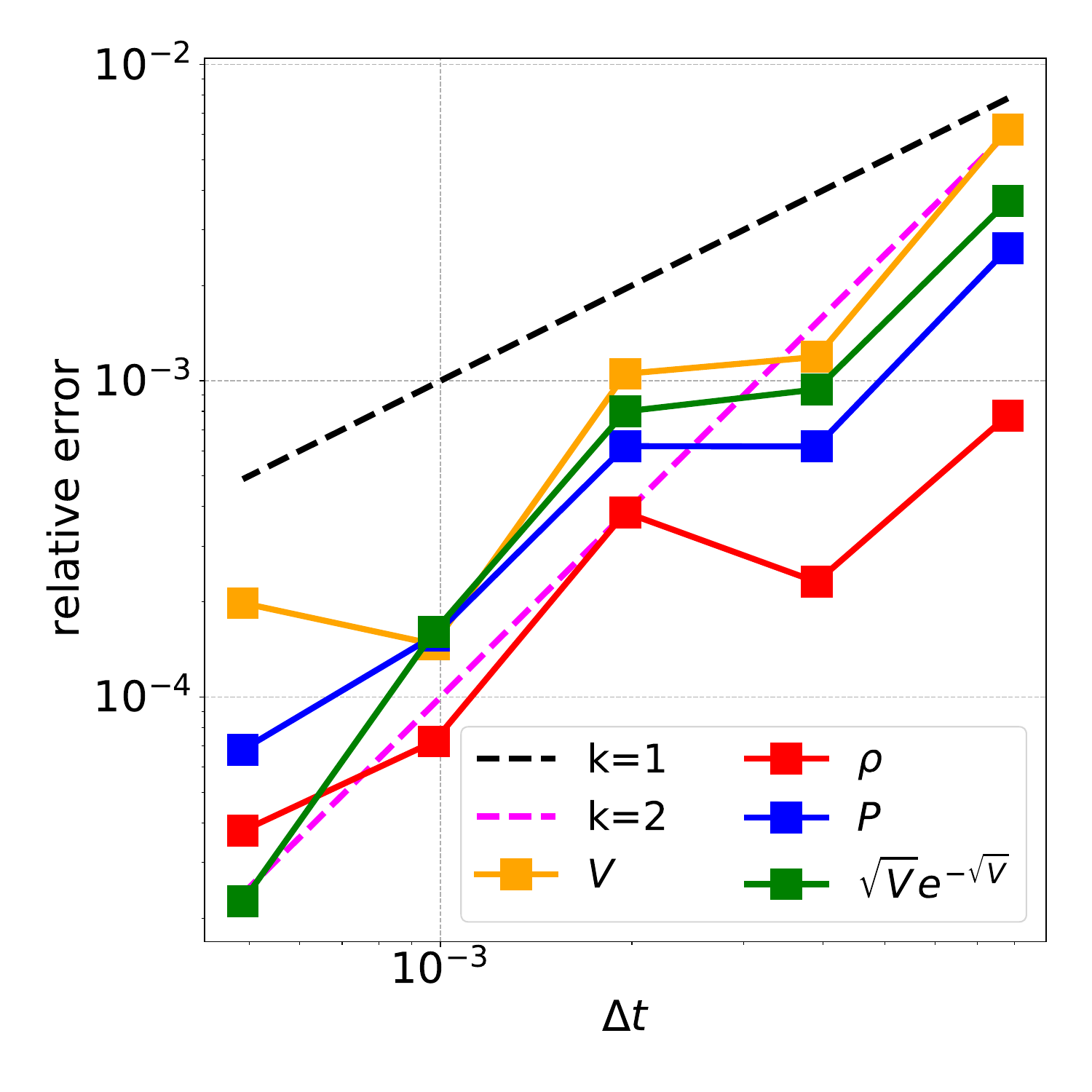}}
    \caption{Relative error verse time step. Left panel: results obtained by Scheme \eqref{eq:EM_Vrp}. The reference solution is given by the numerical result of taking time step$\Delta t_{\text{ref}} = 2^{-14}$. The right panel: results obtained by Algorithm \ref{alg:2ndLangevinSampler}. The reference solution is given by the numerical result of taking time step$\Delta t_{\text{ref}} = 2^{-12}$. For both panels, the solid curves of various colors present the relative errors in different test functions, as shown in the legend. The magenta dotted curve indicates the second-order convergence rate while the black dotted curve indicates the first-order convergence rate. }
    \label{fig:error_toy2}
\end{figure}

\subsection{Lennard-Jones fluid}
We now test our scheme on the simulation of a Lennard-Jones fluid in the NPT ensemble. In this scenario, the interaction between particles is modelled by the Lennard-Jones potential 
\begin{gather}\label{eq:potential_LJ}
    U_{\text{LJ}}(r_{ij}) = 4\left[\left(\frac{1}{r_{ij}}\right)^{12} - \left(\frac{1}{r_{ij}}\right)^{6}\right],
\end{gather}
where $r_{ij} = |\b{r}_i - \b{r}_j|$ denotes the distance between two particles located at $\b{r}_i$ and $\b{r}_j$. Noting that the Lennard-Jones potential diminishes significantly as the distance between particles increases, we employ a cutoff distance $r_c = \max\{L/2, 2.5\}$ with $L=V^{1/3}$ and ignore the interactions between particles at distances greater than $r_c$, which coincides with the setting in \cite[Section \uppercase\expandafter{\romannumeral3}.A]{bernetti2020pressure}. Therefore, the force and the pressure is approximated by
\begin{gather}
    \begin{split}
        \b{F}_i & \approx 24\sum_{j\neq i}\left(2\left|\b{r}_{ij}+V^{1/3}\b{n}_{ij}^{\star}\right|^{-14} - \left|\b{r}_{ij}+V^{1/3}\b{n}_{ij}^{\star}\right|^{-8}\right)\left(\b{r}_{ij}+V^{1/3}\b{n}_{ij}^{\star}\right),\\
        P & \approx \frac{\b{p}^T\b{m}^{-1}\b{p}}{3V} + \frac{8}{V}\sum_{1\leq i<j\leq N} \left(2\left|\b{r}_{ij}+V^{1/3}\b{n}_{ij}^{\star}\right|^{-12} - \left|\b{r}_{ij}+V^{1/3}\b{n}_{ij}^{\star}\right|^{-6}\right)\\
    \end{split}
\end{gather}
where $\b{n}_{ij}^{\star} = \arg\min_{\b{n}}\left|\b{r}_{ij}+V^{1/3}\b{n}\right|$ indicates the periodic box containing the image of $j$ such that the distance between $i$ and $j$ is minimized.

\begin{table}[]
    \centering
    \subtable[$N=5$]{
    \begin{small}
    \begin{tabular}{c|cccccccccc}
       $P_0$ &0.25 &0.5 &1.0 &2.0 &3.0 &4.0 &5.0 &6.0 &7.0 &8.0\\
       \hline
       $E_1$ &0.0009 &1.8E-5 &0.0004 &2.3E-5 &0.0003 &0.0005 &0.0003 &0.0003 &0.0002 &0.0001 \\
       \vspace{0.5em}
       $E_2$ &0.0103 &0.0107 &0.0109 &0.0112 &0.0112 &0.0112 &0.0113 &0.0114 &0.0114 &0.0114 \\
    \end{tabular}
    \end{small}
    }
    \subtable[$N=10$]{
    \begin{small}
    \begin{tabular}{c|cccccccccc}
       $P_0$ &0.25 &0.5 &1.0 &2.0 &3.0 &4.0 &5.0 &6.0 &7.0 &8.0 \\
       \hline
       $E_1$ &0.0004 &0.0004 &0.0011 &0.0008 &0.0007 &0.0004 &0.0004 &0.0004 &0.0004 &0.0003 \\
       \vspace{0.5em}
       $E_2$ &0.0160 &0.0156 &0.0154 &0.0152 &0.0152 &0.0151 &0.0151 &0.0150 &0.0150 &0.0150 \\
    \end{tabular}
    \end{small}
    }
    \subtable[$N=100$]{
    \begin{small}
    \begin{tabular}{c|cccccccccc}
       $P_0$ &0.25 &0.5 &1.0 &2.0 &3.0 &4.0 &5.0 &6.0 &7.0 &8.0 \\
       \hline
       $E_1$ &3.1E-4 &4.1E-4 &2.7E-4 &1.7E-4 &1.3E-4 &9.6E-5 &7.9E-5 &5.8E-5 &4.6E-5 &4.0E-5 \\
       \vspace{0.5em}
       $E_2$ &0.0313 &0.0308 &0.030 &0.029 &0.027 &0.0260 &0.0245 &0.0228 &0.0210 &0.0195 \\
    \end{tabular}
    \end{small}
    }
    \caption{Relative errors in the statistics of pressure virial theorems under different pressure $P_0$, where $E_1$ and $E_2$ is given by Eq. \eqref{eq:E1E2}. Here, all the experiments are conducted under time step size $\Delta t = 10^{-5}$ with $10^8$ iterations. }
    \label{tab:LJ_error}
\end{table}

\begin{figure}[!htbp]
    \centering    
    \includegraphics[width=\textwidth]{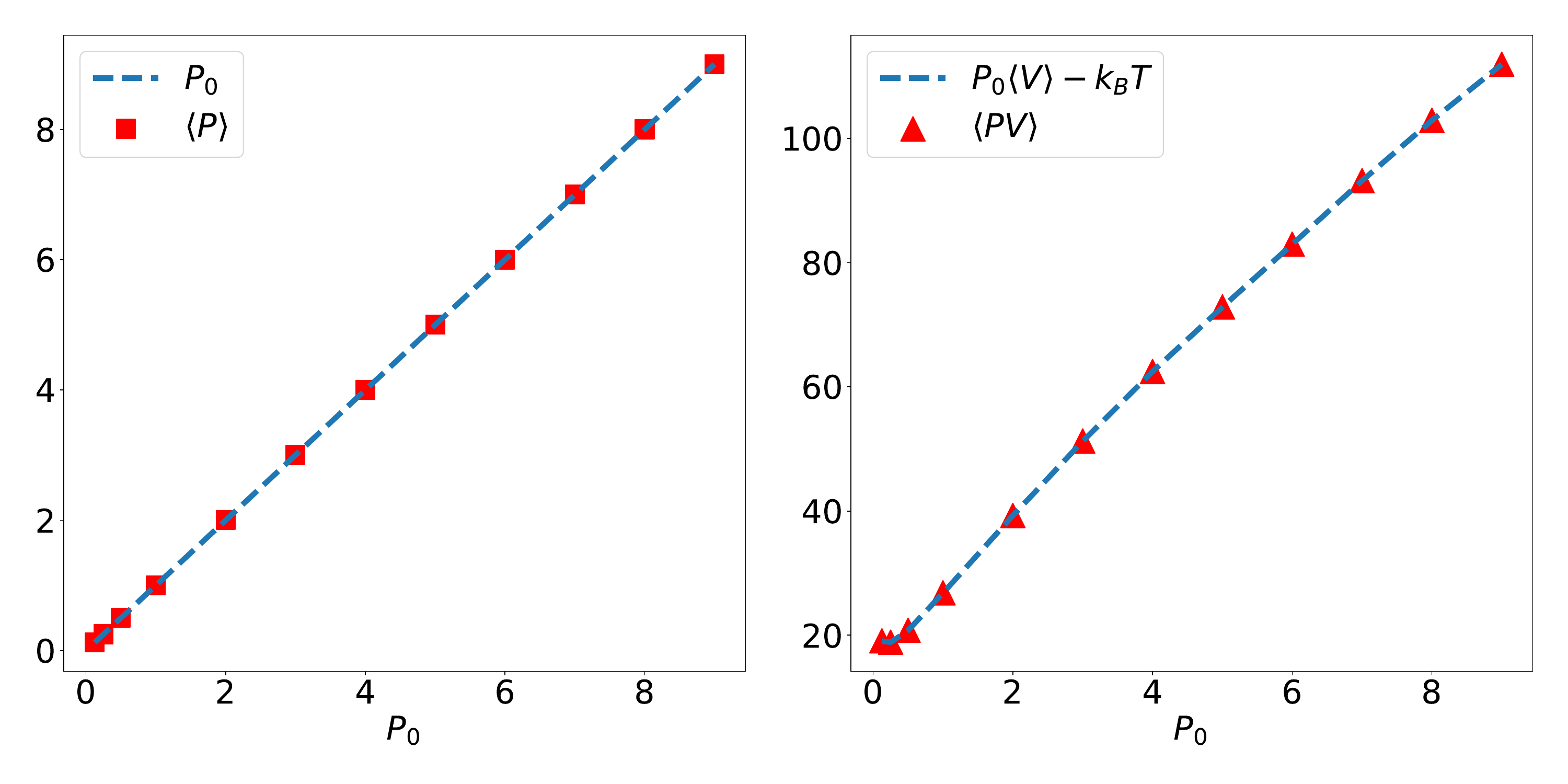}
    \caption{Left panel: Ensemble average $\langle P\rangle$ under different pressure $P_0$. Right panel: Ensemble average $\langle PV\rangle$ under different pressure $P_0$. For both panels, the numerical results are from simulations of the system with $N=10$ particles. The ensembles averages (red squares) are finely overlapped with the theoretical values (blue dotted line), which are $P_0$ for $\langle P\rangle$ and $P_0\langle V\rangle-\beta^{-1}$ for $\langle PV\rangle$, respectively, according to the pressure virial theorems Eq. \eqref{eq:P-virial}. }
    \label{fig:LJ_10_P-virial}
\end{figure}

\begin{figure}[!htbp]
    \centering
    \includegraphics[width=0.6\textwidth]{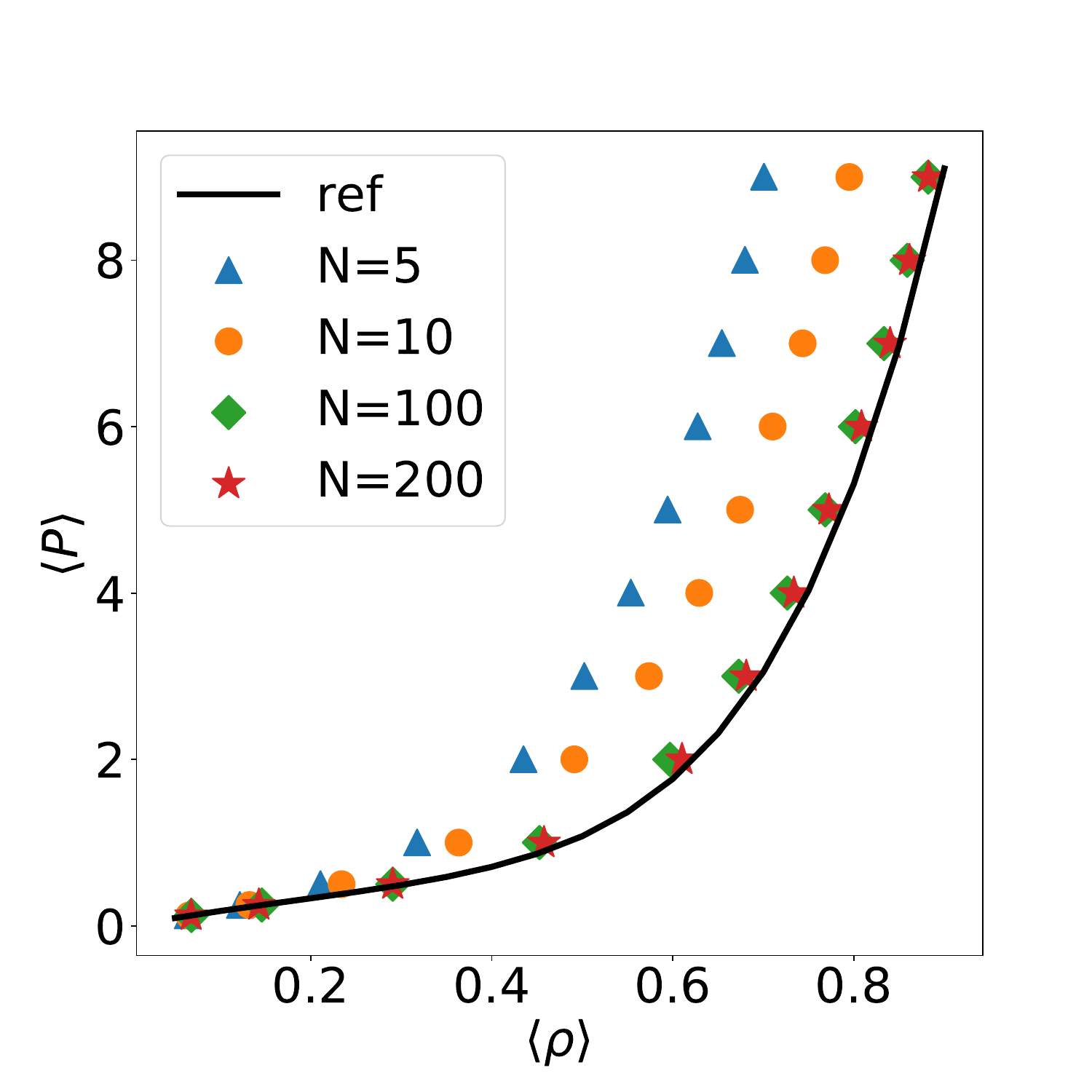}
    \caption{The ensemble average of pressure $\langle P\rangle$ verse the ensemble average of density $\langle\rho\rangle=\langle N/V\rangle$. The blue triangles, orange circles, green diamonds and red stars represent the numerical results of the systems with $N=5, 10, 100$ and $200$ particles, respectively. The results of $N=100$ and $N=200$ particles are closely aligned with the reference solution given by the fitting curve in \cite{johnson1993lennard} (blue curve), indicating that the relation between $\langle P\rangle$ and $\langle \rho\rangle$ converges as $N\rightarrow\infty$. }
    \label{fig:P-rho}
\end{figure}

In our experiment, simulations of system with different number of particles $N = 5, 10, 100$ are performed under fixed temperature $k_BT_0=2.0$ and various pressure $P_0$. All the particles initially located at a uniform lattice within the simulation box. $10^8$ iterations are performed with a time step of $\Delta t=10^{-5}$ in all the simulations. Relative errors in $\langle P\rangle$ and $\langle PV\rangle - P_0\langle V\rangle$ are denoted by 
\begin{gather}\label{eq:E1E2}
    E_1 = \frac{\left|\langle P\rangle - P_0\right|}{P_0} \quad\text{and}\quad E_2 = \beta \left|\langle PV\rangle - P_0\langle V\rangle + \beta^{-1}\right|,
\end{gather}
respectively. One can see from Table \ref{tab:LJ_error} and Figure \ref{fig:LJ_10_P-virial} (ensemble averages of the simulation with $N=100$ particles) that all the simulations capture the NPT ensemble well under different pressure $P_0$, which verifies the pressure virial theorems Eq. \eqref{eq:P-virial} and therefore comfirms the validity of the NPT ensemble. Figure \ref{fig:P-rho} shows the ensemble average of pressure, $\langle P\rangle$, under various ensemble average of density, $\langle \rho\rangle:=N/\langle V\rangle$, from which one can see that the numerical results of $N=100$ and $N=200$ particles (green diamonds and red stars, respectively) capture the reference solution (black curve) given by the fitting curve provided in \cite{johnson1993lennard} well, indicating that the relationship between $\langle P\rangle$ and $\langle \rho\rangle$ converges to the reference solution as $N\rightarrow \infty$. 
Taking the system with $N=10$ particles as an example, we perform 51 simulations at constant volume, with volume equally spaced in the range $[10, 60]$, to obtain the reference solution of the marginal distribution concerning $V$:
\begin{gather}\label{eq:rhoV_LJ}
    \rho_{V,\text{ref}}(V) \propto e^{-\beta\hat{F}(V)},
\end{gather}
where the free energy is computed by thermodynamic integration as \cite{bernetti2020pressure}
\begin{gather}
    \hat{F}(V+\Delta V) = \hat{F}(V) - \frac{\hat{P}(V) + \hat{P}(V+\Delta V)-P_0(V) - P_0(V+\Delta V)}{2}\Delta V
\end{gather}
with $\Delta V=1$. The left panel of Figure \ref{fig:rhoV_LJ} presents the marginal distribution concerning $V$, from which one can see that the empirical density obtained by $10^8$ iterations with time step $\Delta t=10^{-5}$ (red circles) is closely aligned with the reference one (blue curve). 

For comparison, taking the experiment of $N=100$ and $P_0=9.0$ as an example, we fix the time step at $\Delta t = 10^{-5}$ and perform $10^7$ iterations to obtain numerical marginal distribution with respect to $V$ using both the Trotter method provided in \cite[Appendix \uppercase\expandafter{\romannumeral5}.C]{bernetti2020pressure} and Algorithm \ref{alg:2ndLangevinSampler}. The reference solution is provided by the Trotter method, which is obtained using $10^8$ iterations with a time step of $\Delta t = 10^{-6}$. As shown in the right panel of Figure \ref{fig:rhoV_LJ}, the numerical solution obtained with Algorithm \ref{alg:2ndLangevinSampler} (blue squares) closely approximates the reference solution (black curve), whereas the solution obtained with the Trotter method (red triangles) deviates significantly, indicating that Algorithm \ref{alg:2ndLangevinSampler} converges faster than the Trotter method when using the same time step and evolution time. 

\begin{figure}[!htbp]
    \centering
    \subfigure{\includegraphics[width=0.495\textwidth]{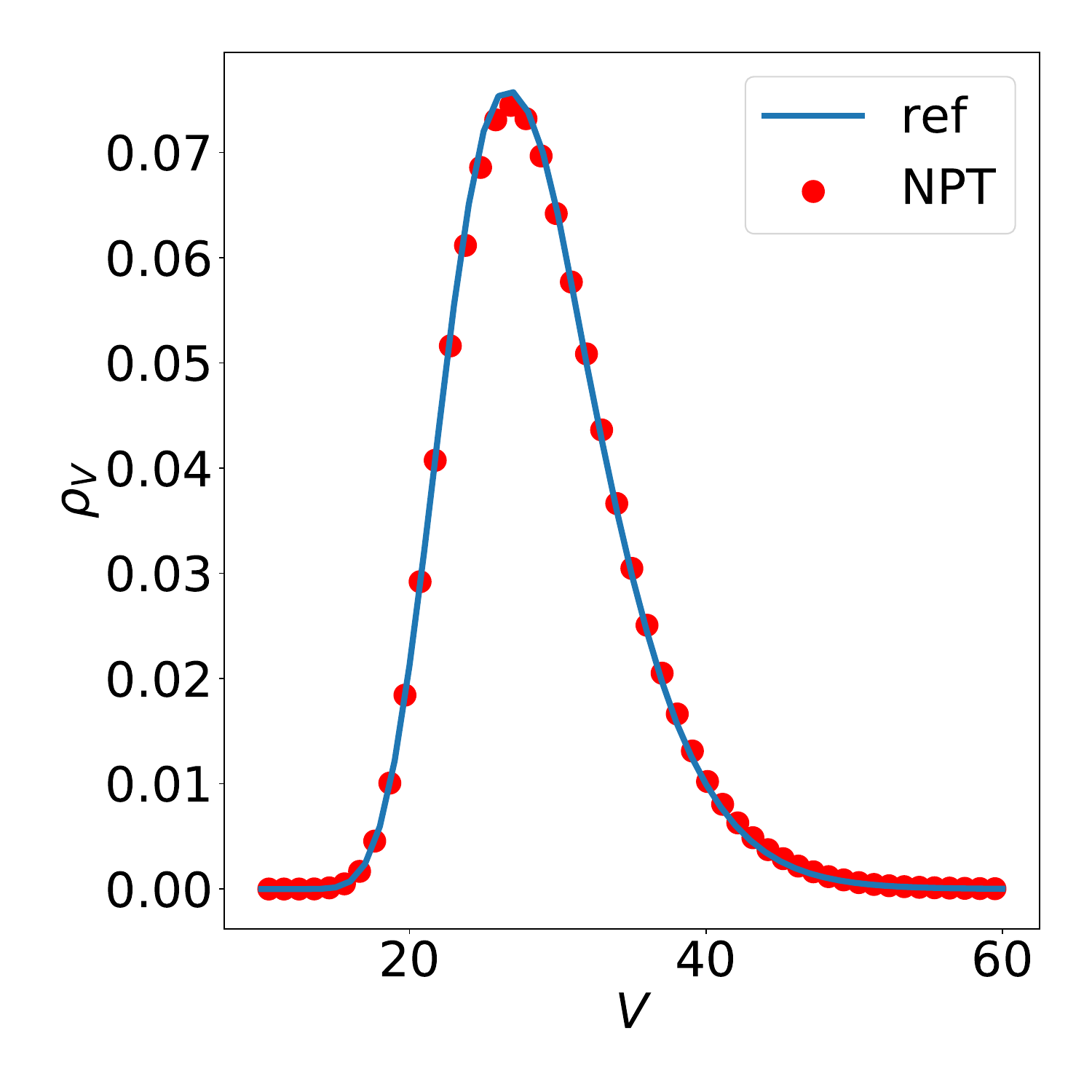}}
    \hfill
    \subfigure{\includegraphics[width=0.495\textwidth]{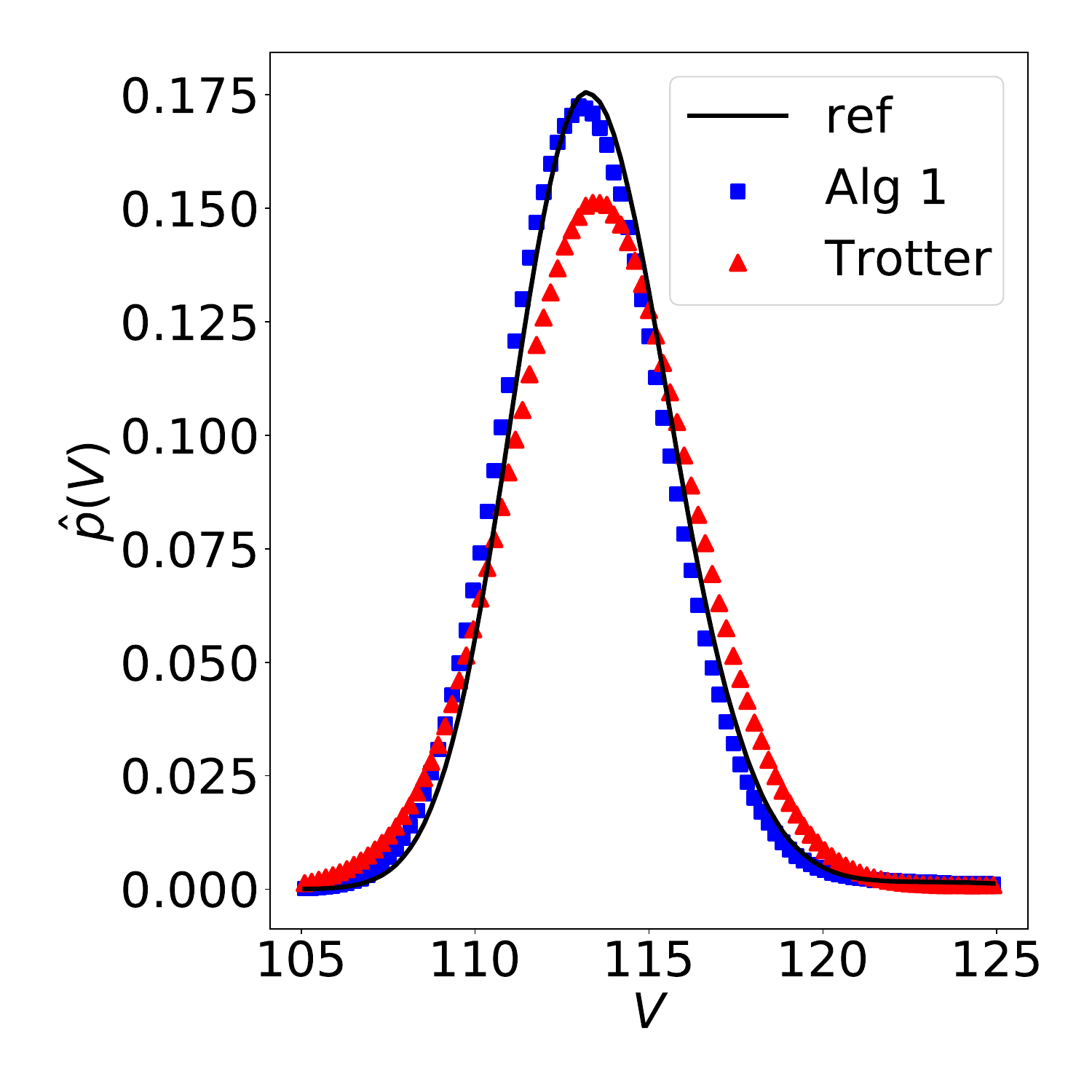}}
    \caption{Marginal distribution with respect to $V$. Left panel: The system contains $N=10$ particles, with fixed pressure $P_0 = 1.0$. The empirical density (red circles) is obtained after $10^8$ iterations with a time step of $\Delta t=10^{-5}$, and the reference density (blue curve) is given by Eq. \eqref{eq:rhoV_LJ}. The empirical density closely matches the reference density. Right panel: Comparison between the Trotter method and Algorithm \ref{alg:2ndLangevinSampler}. There are $100$ particles in the system, and the pressure is fixed at $P_0 = 9.0$. For both methods, the time step is fixed at $\Delta t = 10^{-5}$ with $10^7$ iterations. The numerical density obtained by Algorithm \ref{alg:2ndLangevinSampler} (blue squares) approximates the reference solution (black curve) more closely than the numerical density obtained by the Trotter method (red triangles).}
    \label{fig:rhoV_LJ}
\end{figure}

\section*{Acknowledgement}

This work was financially supported by the National Key R\&D Program of China, Project Number 2021YFA1002800.
The work of L. Li was partially supported by NSFC 12371400 and 12031013,  Shanghai Municipal Science and Technology Major Project 2021SHZDZX0102, and Shanghai Science and Technology Commission (Grant No. 21JC1403700, 21JC1402900).

\bibliographystyle{plain}
\bibliography{npt}

\end{document}